\DeclareMathOperator{\diag}{diag}
\DeclareMathOperator{\spn}{span}
\DeclareMathOperator{\trace}{tr}
\DeclareMathOperator{\rlct}{RLCT}
\DeclareMathOperator{\rank}{rank}
\DeclareMathOperator{\jac}{Jac}
\DeclareMathOperator{\conv}{conv}
\numberwithin{equation}{section}
\theoremstyle{plain}
\newtheorem{theorem}{Theorem}[section]
\newtheorem{prop}[theorem]{Proposition}
\newtheorem{cor}[theorem]{Corollary} 
\newtheorem{conj}[theorem]{Conjecture}
\newtheorem{lemma}[theorem]{Lemma}
\newtheorem{fact}[theorem]{Fact}
\theoremstyle{definition}
\newtheorem{defn}[theorem]{Definition}
\newtheorem{remark}[theorem]{Remark}
\newtheorem*{ex*}{Example}
\newtheorem{example}[theorem]{Example}
\newcommand{\A}{\mathbb{A}}
\newcommand{\Z}{\mathbb{Z}}
\newcommand{\C}{\mathbb{C}}
\newcommand{\R}{\mathbb{R}}
\newcommand{\bdelta}{\boldsymbol\delta}
\newcommand{\cJ}{\mathcal{J}}
\title[Singular Learning Theory for Factor Analysis]{Singular Learning Theory for Factor Analysis} 
\author[M. Drton]{Mathias Drton}
\address{Department of Mathematics, TUM School of Computation, Information and Technology, Technical University of Munich, Germany}
\email{mathias.drton@tum.de}
\author[E. Gross]{Elizabeth Gross}
\address{Department of Mathematics, University of Hawaii at M\={a}noa, U.S.A.}
\email{egross@math.hawaii.edu}
\author[D. Kosta]{Dimitra Kosta}
\address{School of Mathematics, University of Edinburgh, UK}
\email{d.kosta@ed.ac.uk}
\author[A. Leykin]{Anton Leykin}
\address{School of Mathematics, Georgia Institute of Technology, U.S.A.}
\email{leykin@math.gatech.edu}
\author[A. McCormack]{Andrew McCormack}
\address{Department of Mathematical and Statistical Sciences, University of Alberta, Canada}
\email{mccorma2@ualberta.ca}
\author[S. Sullivant]{Seth Sullivant}
\address{Department of Mathematics, North Carolina State University, U.S.A.}
\email{smsulli2@ncsu.edu}
\author[D. Windisch]{Daniel Windisch}
\address{Department of Computer Science, KU Leuven, Belgium}
\email{daniel.windisch.math@gmail.com}
\begin{document}

\begin{abstract}
Watanabe's singular learning theory provides a framework for asymptotic analysis of Bayesian model selection for statistical models with singularities, where traditional statistical regularity assumptions fail.  Learning coefficients, also known as real log canonical thresholds, play a central role in singular learning, as they govern the asymptotic behavior of Bayesian marginal likelihood integrals in settings where the Laplace approximations used for regular statistical models are not applicable.  Learning coefficients are algebraic invariants that quantify the geometric complexity of a model and reveal how the singular structure impacts the model's generalization properties.  In this paper, we apply algebraic methods to study the learning coefficients of factor analysis models, which are widely used latent variable models for continuously distributed data.  Our main result provides exact formulas for learning coefficients of factor analysis models. Moreover, we study the singularity types of specific factor analysis models in detail.
%
%
%
\end{abstract}

\keywords{Bayesian statistics, factor analysis model, learning coefficient, real log canonical threshold, resolution of singularities, singular learning theory}
\subjclass{62H25, 62F15, 62R01, 14E15, 14P05}

\maketitle

\section{Introduction}\label{section:introduction}

A Bayesian approach to statistical model selection involves the evaluation of the \emph{marginal likelihood}, which is obtained by integrating the likelihood function against the prior distribution \cite[Chap.~7]{robert:2007}.  In particular, the posterior probability of a considered model equals the normalization of the marginal likelihood under weighting by a prior model probability.
As an exact value for the marginal likelihood integral is difficult to obtain in general, different approximation schemes have been developed \cite{friel:2012}.  The theme taken up in this paper is asymptotic approximation and, specifically, the widely used approach of \emph{Bayesian information criteria} that originated in the work of \cite{Schwarz:1978}.  Bayesian information criteria (BIC) provide a proxy for the logarithm of the marginal likelihood that is formed by penalizing the maximum log-likelihood achievable in a model.  The BIC penalty for a $d$-dimensional statistical model equals $\frac{d}{2} \log(n)$, where $n$ is the sample size.  Under regularity conditions, this penalty coincides with the Laplace approximation of the log-marginal likelihood up to a remainder that is bounded in probability \cite{Haughton:1988}.

A \emph{singularity} of a parametric statistical model is a parameter vector at which the model's Fisher-information matrix is singular.  At singularities, the BIC approximation need no longer hold.  However, seminal work of Watanabe shows, under mild analyticity assumptions on the model and its parametrization, that even at singularities, the logarithm of the marginal likelihood integral can be approximated with the help of penalties of the form $\ell \log(n)$, where $\ell$ is a geometric invariant referred to as a \emph{learning coefficient} \cite{watanabe:book}.  Watanabe's work uses  resolution of singularities of real algebraic and analytic varieties to build a stochastic version of the theory of singular Laplace integrals \cite{arnold-vol2:1988}.  Geometrically, the learning coefficient at a given parameter vector is the \emph{real log canonical threshold} of a function defining the real fiber of the model's parametrization at the considered point.  Although computing a resolution of singularities is infeasible at the scale of practical statistical problems, learning coefficients have been derived for several families of models.  This information can then be exploited for statistical model selection via the \emph{singular BIC (sBIC)} methodology proposed in \cite{sbic:2017}.

The  prime source of singular statistical models is latent variable models such as mixture models \cite{rousseau:2011,yamazaki-watanabe:2003,aoyagi:2010:commstat}, reduced rank regression models \cite{aoyagi-watanabe:2005}, neural networks \cite{aoyagi:2009}, or latent tree models \cite{zwiernik:2011,latenttrees:2017}.  A key example of a latent variable model that has not yet been studied from the perspective of Watanabe's singular learning theory is the factor analysis model, which is arguably the most fundamental model with continuous latent variables \cite{harman:1976}.
In this paper, we seek to address this gap and initiate the study of learning coefficients for factor analysis.

Factor analysis models explain dependence among a number of observed random variables by a smaller number of latent random variables, called factors. 
Suppose we observe a sample of independent and identically distributed (i.i.d.) random vectors $X_1,\dots,X_n$, each vector being $p$-dimensional and, without loss of generality, centered to have a zero mean vector.
In the model with $k$ factors, we assume that the observed random vectors are generated as
\[
X_i = \Lambda F_i + \epsilon_i, \quad i=1,\dots,n,
\]
where $F_i$ is a $k$-vector comprised of i.i.d.~standard normal random variables, the matrix $\Lambda$ is a real-valued matrix in $\mathbb{R}^{p\times k}$ referred to as the \emph{factor loading matrix}, and $\epsilon_i$ is a $p$-vector that constitutes noise and has independent coordinates with $j$-th entry $\epsilon_{ij}$ normally distributed with mean zero and variance $\psi_j>0$.  It follows that the observations $X_1,\dots,X_n$ are multivariate normal, each with a covariance matrix that is parameterized as 
\begin{equation}
\label{eq:parametrization}
\Sigma_k(\psi,\Lambda) = \diag(\psi) + \Lambda\Lambda^T
\end{equation}
for a parameter vector $\psi=(\psi_1,\dots,\psi_p)\in
 \R_{ > 0}^p=(0,\infty)^p$ and a parameter matrix
$\Lambda=(\lambda_{ij})\in\mathbb{R}^{p\times k}$.  The factor analysis model can be identified with its set of covariance matrices, that is, the image of the parametrization map $\Sigma_k$.  The dimension of this image is the minimum of
\begin{equation}
  \label{eq:dimension}
  d_k\;\coloneqq\;\dim\big( \Sigma_k\big( \R_{ > 0}^p,\mathbb{R}^{p\times k} \big)\big)
  \;=\;
  (k+1)p - \binom{k}{2},
\end{equation}
and the dimension $p(p+1)/2$ of the ambient space of symmetric $(p \times p)$-matrices, compare~\cite{drton:2007}. 

At regular points, the model dimension yields the BIC-penalty term $\frac{d_k}{2}\log(n)$.  However, the factor analysis model also has singularities that are important for the problem of selecting the number of factors $k$ \cite{drton:2009,sbic:2017}.  The singularities correspond to the points $(\psi,\Lambda)$ at which the rank of the Jacobian of $\Sigma_k(\psi,\Lambda)$ drops. Singularities of the factor analysis model are studied in \cite[Theorem 5.9]{anderson1956}, which gives a general sufficient condition for a point being singular. In addition, a more explicit description of the singular points of the one factor model is given in \cite{drton:2009,li:2012}.

The main theorem of this paper gives a formula for the learning coefficients of the factor analysis model.  The learning coefficients  depend on the covariance matrix $\Sigma_0$ that defines the distribution of the observations $X_1,\dots,X_n$ as well as a prior distribution on the parameters $(\psi,\Lambda)$.  We will consider the default case where the prior is a smooth and everywhere positive function, in which case the precise form of the prior has no further effect on learning coefficients. Our results on learning coefficients can then be applied to model selection among factor analysis models via the sBIC. This has been implemented in the statistical programming environment \emph{R}~\cite{sBIC-R} with the necessary information from our main Theorem~\ref{thm:main-formula-intro} below. 

\begin{theorem}
\label{thm:main-formula-intro}
     Let $r\le k$ be any two non-negative integers. The learning coefficient $\ell_{kr}$ and its order $m_{kr}$ for the factor analysis model with $k$ latent factors at a fixed generic covariance matrix $\Sigma_0 \in \R^{p\times p}$ in the $r$-factor model satisfy 
  \[
  \ell_k(\Sigma_0) = \ell_{kr} = \frac{p(k+2)+r(p-k+1)}{4} \quad
  \text{ and } \quad
    m_k(\Sigma_0) = m_{kr} = 1.
  \]
  These are exactly the invariants used in the sBIC for model selection among factor analysis models, see Definition~\ref{def:learn-coeff} and Section~\ref{subsection:sbic}.
\end{theorem}

Note that, in the considered setup, the covariance matrix $\Sigma_0$ belongs to the factor analysis model with $r$ factors.  Obtaining the exact value of the learning coefficient depending on $r$ is thus crucial for improved statistical model selection via the criterion from \cite{sbic:2017}.

In a full analysis of the case $k=1$, we show that the genericity assumption on $\Sigma_0$ is necessary and that there exist special singularities at which the learning coefficient is smaller than the value from Theorem~\ref{thm:main-formula-intro}. 


This manuscript is organized as follows: 
Section~\ref{section:preliminaries-rlct} contains preliminaries on real log canonical thresholds (aka learning coefficients) which form a general framework for the geometric theory of singular statistical models.
Section~\ref{section:preliminaries-factor-analysis} reviews factor analysis models through the lens of singular learning theory. In Section~\ref{section:learning-coefficients}, we prove Theorem~\ref{thm:main-formula-intro} after giving an alternate proof for an upper bound on the learning coefficients. We include this as we believe that this could be relevant to obtaining upper bounds for other models. Finally, we  classify the singularity types of the one-factor model (Section~\ref{subsection:distinct-one-factor-singularities}) and treat the case of generic one-factor covariance matrices (Section~\ref{subsection:one-factor}), where we analyze the exact form of the genericity assumption.

\section{Preliminaries on real log canonical thresholds}\label{section:preliminaries-rlct}

We recall some facts on the algebraic theory of learning coefficients from Shaowei Lin's paper~\cite{Lin2017Ideal}.
 Throughout, let $\Omega \subseteq \mathbb R^d$ be a compact set, and let $\mathcal A(\Omega)$ be the ring of real-valued functions that are analytic at every point in $\Omega$.

\subsection{Real log canonical thresholds of ideals}\label{subsection:rlct-ideals}

 Let $\mathcal I = \langle f_1, \ldots, f_r\rangle \subseteq \mathcal A(\Omega) $ be an ideal generated by functions $f_i$ not identically $0$, and let $\phi: \mathbb R^d \to \mathbb R$ be a smooth, nearly analytic function, that is, $\phi$ is the (point-wise) product of a function in $\mathcal{A}(\Omega)$ and a smooth function that is strictly positive on $\Omega$.  Then the zeta function
$$\zeta (z) = \int_{\Omega} (f_1(\omega)^2 + \ldots + f_r(\omega)^2)^{-z/2} |\phi(\omega)| \ d \omega$$ 
 has an analytic continuation to the whole complex plane and the poles of the continuation are positive rational numbers, see, for instance,~\cite{Atiyah:1970}. Let $\ell_\Omega(\mathcal{I};\phi)$ be the smallest of these poles and $m_\Omega(\mathcal{I};\phi)$ its multiplicity as a pole of $\zeta(z)$. The pair
 \[
 \rlct_{\Omega}(\mathcal I; \phi) = (\ell_\Omega(\mathcal{I};\phi),m_\Omega(\mathcal{I};\phi))
 \]
 is called the \emph{real log canonical threshold} of the ideal $\mathcal I$ with respect to the \emph{amplitude function} $\phi$ over $\Omega$. With slight abuse of notation, we will also refer to $\ell_\Omega(\mathcal{I};\phi)$ as the \emph{real log canonical threshold} and to $m_\Omega(\mathcal{I};\phi)$ as its \emph{multiplicity} or \emph{order}. The pair $\rlct_\Omega(\mathcal{I};\phi)$ is independent of the choice of generators $f_1,\ldots,f_r$~\cite[Proposition 6]{Lin2017Ideal}. Moreover, it can be computed locally, in the following sense. We can order pairs $(\lambda,m),(\lambda',m') \in \R^2$ by the total ordering
 \begin{equation}\label{eq:order}
 (\lambda,m) \leq (\lambda',m') \text{ if and only if } (\lambda < \lambda' \text{ or } (\lambda = \lambda' \text{ and } m \geq m')),
 \end{equation}
that is, by lexicographic order with reversed order in the second component. The following is~\cite[Proposition 4]{Lin2017Ideal} with $f(\omega) = (f_1(\omega)^2 + \ldots + f_r(\omega)^2)^{-1/2}$.

\begin{fact}[Locality of the $\rlct$]\label{fact:local-rlct}
    For every \( x \in \Omega \), there exists a compact neighborhood \( \Omega_x \subseteq \Omega \) of \( x \) such that
\[
\rlct_{x}(\mathcal{I}; \phi) \coloneqq \rlct_{\Omega_x}(\mathcal{I}; \phi) = \rlct_U(\mathcal{I}; \phi)
\]
for all compact neighborhoods \( U \subseteq \Omega_x \) of \( x \). Moreover,
\[
\rlct_{\Omega}(\mathcal{I}; \phi) = \min_{x \in \Omega} \rlct_{x}(\mathcal{I}; \phi),
\]
where the minimum is with respect to the order of~(\ref{eq:order}) and it suffices to take the minimum over all \( x \) in the analytic variety \( \mathcal{V}(\mathcal{I}) = \{ \omega \in \Omega \mid \forall g \in \mathcal{I} \ g(\omega) = 0 \} \).
\end{fact}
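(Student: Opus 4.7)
The plan is to reduce the real log canonical threshold of the real-analytic zeta integral $\zeta(z)$ to local monomial integrals via resolution of singularities in the real-analytic category, and then use a smooth partition of unity to recover the global $\rlct_\Omega$ as the minimum of the local $\rlct_x$'s. To establish part~(1), fix $x\in\Omega$. Hironaka's resolution theorem provides an open neighborhood $\tilde\Omega_x\ni x$ and a proper real-analytic map $\rho\colon M\to\tilde\Omega_x$ from a real-analytic manifold $M$ such that near every point $y\in\rho^{-1}(x)$, in suitable local coordinates $u_1,\ldots,u_d$, both the pullback $\rho^*(f_1^2+\cdots+f_r^2)$ and the Jacobian $|\det J_\rho|$ assume monomial forms $u^{2\kappa(y)}h_y(u)^2$ and $u^{\tau(y)}\tilde h_y(u)$, with analytic units $h_y,\tilde h_y$. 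Since $\rho^{-1}(x)$ is compact by properness, finitely many such charts cover it. Pulling back $\zeta$ and localizing with a partition of unity on $M$ produces a finite sum of standard monomial integrals whose smallest pole and multiplicity are computed directly from the exponents $(\kappa(y),\tau(y))$---data depending only on the germ of $\rho$ at $x$. Choosing a compact $\Omega_x\subseteq\tilde\Omega_x$ small enough that $\rho^{-1}(\Omega_x)$ lies inside these charts forces every compact neighborhood $U\subseteq\Omega_x$ of $x$ to yield exactly the same contributions, which proves part~(1).

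For part~(2), compactness of $\Omega$ yields a finite cover by neighborhoods $\Omega_{x_1},\ldots,\Omega_{x_N}$ of the type produced in part~(1), with subordinate smooth partition of unity $\chi_1,\ldots,\chi_N$. The decomposition $|\phi|=\sum_i\chi_i|\phi|$ writes $\zeta_\Omega(z)$ as a sum of $N$ zeta functions, the $i$-th localized near $x_i$ and having smallest pole at $\rlct_{x_i}(\mathcal I;\phi)$. Since $|\phi|$ and each $\chi_i$ are non-negative, the leading Laurent coefficients of the summands at their respective smallest poles are positive and cannot cancel under addition, so the smallest pole of $\zeta_\Omega$ together with its multiplicity is exactly the minimum---in the order~(\ref{eq:order})---of the local values $\rlct_{x_i}(\mathcal I;\phi)$. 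Part~(3) is then immediate: if $x\notin\mathcal V(\mathcal I)$ then $f_1^2+\cdots+f_r^2$ is bounded below by a positive constant on a neighborhood of $x$, so $\zeta$ is entire there and $\rlct_x(\mathcal I;\phi)$ is maximal under~(\ref{eq:order}), making such $x$ irrelevant to the minimum.

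The principal obstacle is the resolution step, together with the bookkeeping that confirms the monomial contributions concentrated along $\rho^{-1}(x)$ truly govern the RLCT on all sufficiently small compact neighborhoods of $x$. A secondary subtlety is the non-cancellation argument in part~(2): it hinges on positivity of $|\phi|$ and of the $\chi_i$, and this is precisely what causes the order~(\ref{eq:order})---which reverses the ordering of multiplicities---to reproduce the pole data of a sum of meromorphic zeta functions.
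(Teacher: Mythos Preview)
The paper does not supply a proof here; it records the statement as a fact and defers to \cite[Proposition~4]{Lin2017Ideal}. Your outline is essentially the standard argument behind that reference: local resolution of singularities puts the zeta integral into monomial form, and a partition-of-unity decomposition together with non-cancellation of the leading Laurent coefficients recovers the global $\rlct$ as the minimum of the local ones. So rather than taking a different route, you are filling in what the paper leaves to the citation.

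Two points deserve tightening. First, in the resolution step you must also monomialize the analytic factor of the nearly analytic amplitude $\phi$; its smooth strictly positive part is a unit and harmless, but zeros of the analytic part shift pole locations and must be absorbed into the exponent $\tau(y)$ alongside the Jacobian. Second, in the non-cancellation argument the leading Laurent coefficients are not literally positive: for a one-variable piece $\int_0^\epsilon u^{-\kappa z+\tau}g(u)\,du$ with $g(0)>0$ the residue at the smallest pole is $-g(0)/\kappa<0$, and a pole of order $m$ carries sign $(-1)^m$. What matters is that all contributing pieces carry the \emph{same} sign at the minimal pole of maximal order, which is what prevents cancellation; this is also precisely why the reversed ordering on multiplicities in~(\ref{eq:order}) is the right one for sums. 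You should also make explicit that the partition of unity is chosen with $\chi_i(x_i)>0$, so that the $i$-th localized zeta function genuinely achieves its smallest pole at $\rlct_{x_i}(\mathcal I;\phi)$ rather than at something strictly larger.
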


For homogeneous ideals, we can directly use this local notion to determine the real log canonical threshold, see~\cite[Theorem 2]{aoyagi:2013:entropy}. Recall that a function $f$ on $\Omega$ is homogeneous in the subset $\omega_1,\ldots,\omega_j$ of the variables if there exists a non-negative integer $\delta$ such that $f(a\omega_1,\ldots,a\omega_j,\omega_{j+1},\ldots,\omega_d) = a^\delta f(\omega)$ for all $\omega = (\omega_1,\ldots,\omega_d) \in \Omega$.

\begin{fact}[Homogenous ideals]\label{fact:homogeneous}
    Let $1 \leq j \leq d$ and suppose that the space $U = \{\omega \in \Omega \mid \omega_1 = 0, \ldots, \omega_j = 0\}$ is non-empty. Fix $\omega_0 \in U$. Moreover, assume that generators $f_1,\ldots,f_r$ of $\mathcal{I}$ and $\phi$ are homogeneous functions of $\omega_1,\ldots,\omega_j$, and that $\phi(\omega_0) \geq c\phi(\omega)$ for a constant $c > 0$ and all $\omega$ in a neighborhood of $\omega_0$. Then
    \[
    \rlct_{\omega_0}(\mathcal{I};\phi) \leq \rlct_{\omega}(\mathcal{I};\phi)
    \]
    for all $\omega \in \Omega$ with $\omega_i = \omega_{0i}$ for all $i > j$.
    In particular,
    \[
    \rlct_\Omega(\mathcal{I};\phi) = \min_{\omega \in U} \rlct_{\omega}(\mathcal{I};\phi).
    \]
\end{fact}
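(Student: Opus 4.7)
The plan is a scaling argument that exploits the homogeneity of $f_1,\ldots,f_r$ and $\phi$ in the first $j$ variables in order to compare the local zeta integral at $\omega_0$ with those at other points of the fiber $\{\omega : \omega_i = \omega_{0i},\ i>j\}$. The key observation is that the family of maps $\sigma_t(\omega) = (t\omega_1,\ldots,t\omega_j,\omega_{j+1},\ldots,\omega_d)$ fixes $U$ pointwise and rescales the data as $f_i\circ\sigma_t = t^{\delta_i}f_i$ and $\phi\circ\sigma_t = t^{\delta_\phi}\phi$, where $\delta_i$ and $\delta_\phi$ are the respective homogeneity degrees. As $t\to 0^+$, this contracts any point $\omega$ of the fiber through $\omega_0$ radially onto $\omega_0$, and it allows rewriting the local zeta integral near $\omega_0$ as a product of a radial integral in $t$ and an integral over the fiber.

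By Fact~\ref{fact:local-rlct}, I may restrict to a compact product neighborhood $B\times V$ of $\omega_0$, with $B\subset\R^j$ a closed ball about the origin and $V\subset\R^{d-j}$ a neighborhood of $(\omega_{0,j+1},\ldots,\omega_{0,d})$. Introducing polar-type coordinates $(\omega_1,\ldots,\omega_j) = t(\eta_1,\ldots,\eta_j)$ with $t\in[0,t_0]$ and $\eta\in S^{j-1}$, and invoking the homogeneity, the local zeta function factors (in the case where all $\delta_i$ share a common value $\delta$) as
\[
\zeta_{\omega_0}(z) \;=\; \int_0^{t_0} t^{\,j-1+\delta_\phi-z\delta}\,dt \;\cdot\; \int_{S^{j-1}\times V}\Bigl(\sum_i f_i(\eta,\omega_{j+1},\ldots,\omega_d)^2\Bigr)^{-z/2}|\phi(\eta,\omega_{j+1},\ldots,\omega_d)|\,d\sigma(\eta)\,d\omega_{j+1}\cdots d\omega_d.
\]
The $t$-integral contributes a pole at $z^* = (j+\delta_\phi)/\delta$. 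The angular--fiber integral is itself a zeta function, and applying Fact~\ref{fact:local-rlct} inside $S^{j-1}\times V$ shows that its leftmost pole lies at or to the left of the local RLCT $\ell_\omega$ at any single fiber point $\omega$. Taking the minimum of the two pole contributions and tracking multiplicities under the ordering~(\ref{eq:order}) (where an additional contribution from the $t$-pole can only decrease $\rlct_{\omega_0}$, never increase it) yields the first assertion. The hypothesis $\phi(\omega_0)\geq c\phi(\omega)$ is used precisely here to guarantee that $|\phi|$ does not vanish faster at $\omega_0$ than at its fiber neighbors, which would otherwise skew the comparison of multiplicities.

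The second conclusion $\rlct_\Omega(\mathcal I;\phi) = \min_{\omega\in U}\rlct_\omega(\mathcal I;\phi)$ then follows from Fact~\ref{fact:local-rlct} together with the first assertion applied fiberwise: every point of $\mathcal V(\mathcal I)$ lies on a fiber meeting $U$ (since positive-degree homogeneity in $\omega_1,\ldots,\omega_j$ forces the generators to vanish on $U$), so the global minimum of local RLCTs is attained on $U$. The main obstacle I anticipate is a rigorous treatment when the generators have distinct degrees $\delta_i$, since then $\sum_i f_i^2$ is no longer homogeneous and the clean factorization of $\zeta_{\omega_0}(z)$ above breaks down. The natural remedy is to introduce a quasi-homogeneous weighting on $(\omega_1,\ldots,\omega_j)$, or equivalently to blow up the subspace $U\subseteq\Omega$ so that each $f_i$ becomes quasi-homogeneous in a single exceptional coordinate; the radial integration argument then applies term by term and the same upper bound is recovered.
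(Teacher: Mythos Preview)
The paper does not give a self-contained proof of this fact; it is stated with a reference to \cite[Theorem~2]{aoyagi:2013:entropy}, and a (commented-out) remark indicates that one should run Aoyagi's argument with \cite[Proposition~3.11]{lin:phd} substituted for one auxiliary lemma. Your scaling map $\sigma_t$ is exactly the right tool, and in the equal-degree case your polar-coordinate factorization of the local zeta integral is sound. The difficulty you flag for distinct degrees $\delta_i$ is real, however, and your proposed remedy does not close it: after blowing up $U$, the pullback of $f_i$ becomes $t^{\delta_i}\tilde f_i$ in the exceptional coordinate $t$, so $\sum_i (t^{\delta_i}\tilde f_i)^2$ is still inhomogeneous in $t$ and the radial integral does not separate from the angular one. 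A quasi-homogeneous weighting on the $\omega_l$ cannot help either, since the degrees $\delta_i$ are attached to the generators $f_i$, not to individual coordinates.

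A much shorter argument bypasses the zeta-function factorization entirely and handles arbitrary degrees uniformly. For each fixed $t>0$, the map $\sigma_t$ is an analytic isomorphism; since $t^{\delta_i}f_i$ and $f_i$ generate the same ideal, one has $\sigma_t^*\mathcal I=\mathcal I$, while $(\phi\circ\sigma_t)\cdot|\det\jac\sigma_t|=t^{\delta_\phi+j}\phi$ is a positive constant multiple of $\phi$. The chain rule (Fact~\ref{fact:chain-rule}) therefore gives $\rlct_{\sigma_t(\omega)}(\mathcal I;\phi)=\rlct_{\omega}(\mathcal I;\phi)$ for every $t>0$. Now send $t\to 0^+$ so that $\sigma_t(\omega)\to\omega_0$; for $t$ small, $\sigma_t(\omega)$ lies in the neighborhood $\Omega_{\omega_0}$ of Fact~\ref{fact:local-rlct}, on which $\rlct_{\omega_0}=\rlct_{\Omega_{\omega_0}}=\min_{x\in\Omega_{\omega_0}}\rlct_x$, whence $\rlct_{\omega_0}\leq\rlct_{\sigma_t(\omega)}=\rlct_\omega$. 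The hypothesis $\phi(\omega_0)\geq c\,\phi(\omega)$ is what ensures the amplitude comparison goes through in this last semicontinuity step. Your deduction of the second assertion from the first via Fact~\ref{fact:local-rlct} applied fiberwise is then correct.
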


\subsection{Newton polyhedra and monomial ideals}\label{subsection:newton-polyhedra}
We will now revisit how to compute RLCTs of monomial ideals via Newton polyhedra as described, for instance, in~\cite{Lin2017Ideal}; see also~\cite[Chapter 8]{arnold-vol2:1988}. Keeping with the above notation, suppose that $0 \in \Omega$ and that $\mathcal{I} = \langle \omega^{a_1},\ldots, \omega^{a_r}\rangle \subseteq \mathcal{A}(\Omega)$ is an ideal generated by monomials, where $\omega^b = \omega_1^{b_1} \cdots \omega_d^{b_d}$ for $b \in \Z_{\geq 0}^d$. As $\mathcal{I}$ is homogeneous, we know by Fact~\ref{fact:homogeneous} that, concerning the RLCT, the only point of interest is the origin. 

A combinatorial method to determine $\rlct_0(\mathcal{I};\omega^\tau)$ for a fixed $\tau = (\tau_1,\ldots,\tau_d) \in \Z_{\geq 0}^d$ is given as follows. The \textit{Newton polyhedron} of $\mathcal{I}$ is the convex polyhedron
\[
\mathcal{P}(\mathcal{I}) = \conv \{a_i + \xi \mid i \in [r], \xi \in \R^d_{\geq 0} \}.
\]
The \textit{$\tau$-distance} $\delta_\tau$ of $\mathcal{I}$ is the smallest $t \in \R_{\geq 0}$ such that $t \cdot(\tau_1+1,\ldots,\tau_d+1) \in \mathcal{P}(\mathcal{I}) $ and its \textit{$\tau$-multiplicity} $\mu_\tau$ is the codimension of the face $F$ of $\mathcal{P}(\mathcal{I})$ with $t \cdot(\tau_1+1,\ldots,\tau_d+1) \in F$.\\

\begin{fact}~\cite[Theorem 3]{Lin2017Ideal}\label{fact:monomial}
   The real log canonical threshold at the origin of a monomial ideal is given by the $\tau$-distance and $\tau$-multiplicity as 
   \[
   \rlct_0(\langle \omega^{a_1},\ldots,\omega^{a_r}\rangle; \omega^\tau) = (1/\delta_\tau, \mu_\tau).
   \]
\end{fact}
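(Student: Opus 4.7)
The plan is to reduce the statement to a combinatorial linear-programming computation via a toric resolution of singularities adapted to the Newton polyhedron $\mathcal{P}(\mathcal{I})$. First, by Fact~\ref{fact:homogeneous} (the ideal and the amplitude are monomial, hence homogeneous in all variables) together with Fact~\ref{fact:local-rlct}, it suffices to analyze the pole structure of
\[
\zeta(z) \;=\; \int_{\Omega_0} \Big(\sum_{i=1}^r \omega^{2a_i}\Big)^{-z/2} |\omega^\tau|\, d\omega
\]
on an arbitrarily small neighborhood $\Omega_0$ of the origin in $\R_{\geq 0}^d$ (the other orthants are handled symmetrically).

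Next, I would choose a regular simplicial refinement $\Sigma$ of the inner normal fan of $\mathcal{P}(\mathcal{I})$ with the property that on each top-dimensional cone $\sigma$ exactly one of the vertices $a_{i(\sigma)}$ of $\mathcal{P}(\mathcal{I})$ minimizes $\langle v, a_i\rangle$ for all $v\in\sigma$. The associated toric chart gives a monomial map $\pi_\sigma$ under which every $\omega^{a_i}$ pulls back to a monomial, the vertex $a_{i(\sigma)}$ pulls back to the termwise minimum, and the sum factors as
\[
\pi_\sigma^\ast\Big(\sum_{i=1}^r \omega^{2a_i}\Big) \;=\; t^{2e^\sigma}\, u_\sigma(t),
\]
where $u_\sigma$ is smooth and strictly positive near the chart origin. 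Simultaneously, $|\omega^\tau|$ and the Jacobian of $\pi_\sigma$ pull back to monomials with exponent vector $\nu^\sigma\in\Z_{\geq 0}^d$. The zeta integral on chart $\sigma$ then factors, up to a benign smooth factor coming from $u_\sigma$ and a cutoff, into univariate integrals $\int_0^{\epsilon} t_k^{\,\nu_k^\sigma - z\, e_k^\sigma}\, dt_k$, each with a simple pole at $z = (\nu_k^\sigma+1)/e_k^\sigma$. The smallest pole contributed by chart $\sigma$ is therefore
\[
\min_k \frac{\nu_k^\sigma + 1}{e_k^\sigma},
\]
and its order equals the number of indices $k$ achieving this minimum.

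Finally, I would identify $\min_\sigma \min_k (\nu_k^\sigma+1)/e_k^\sigma$ with $1/\delta_\tau$ by LP duality. Writing $\tau+\mathbf{1} = (\tau_1+1,\ldots,\tau_d+1)$, by definition $\delta_\tau$ is the smallest $t>0$ such that $t(\tau+\mathbf{1})\in\mathcal{P}(\mathcal{I})$, equivalently the optimum of the LP that dualizes to
\[
\tfrac{1}{\delta_\tau} \;=\; \max_{v\in\R_{\geq 0}^d\setminus\{0\}} \; \min_i \frac{\langle v, a_i\rangle}{\langle v, \tau+\mathbf{1}\rangle}.
\]
Rays $v$ of the refinement $\Sigma$ correspond to coordinates $t_k$, with $\langle v, a_{i(\sigma)}\rangle = e_k^\sigma$ and $\langle v, \tau+\mathbf{1}\rangle = \nu_k^\sigma + 1$, so the chart-wise minimum matches the LP value, and the optimum is attained on the normal cone of the face $F$ first touched by the ray through $\tau+\mathbf{1}$. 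The rays of $\Sigma$ lying in that normal cone are exactly those that contribute to the minimum, and their number equals $\dim(\text{normal cone}) = \codim F = \mu_\tau$, yielding the claimed multiplicity.

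The main obstacle is the bookkeeping for the smooth unit $u_\sigma$ and the transitions between charts: one must verify that $u_\sigma$ is genuinely a unit on a fixed neighborhood (so that its contribution to $\zeta$ is holomorphic in a right half-plane past $1/\delta_\tau$), handle the cutoff away from the origin (which only contributes entire functions to $\zeta$), and check via the structure of $\Sigma$ that poles from different charts whose smallest value coincides combine to give exactly the multiplicity $\mu_\tau$ rather than a larger number. The rest is standard LP duality applied to the Newton polyhedron.
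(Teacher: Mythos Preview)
The paper does not prove this statement; it is quoted as a fact with a citation to \cite[Theorem~3]{Lin2017Ideal}. Your sketch follows the standard toric-resolution argument (due to Varchenko and reproduced in Lin's paper and in \cite[Chapter~8]{arnold-vol2:1988}), so in substance you are reproducing the cited proof rather than diverging from it.

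One point in your multiplicity argument is stated imprecisely. You write that ``the rays of $\Sigma$ lying in that normal cone \ldots\ their number equals $\dim(\text{normal cone}) = \codim F = \mu_\tau$.'' After a simplicial refinement, the normal cone of $F$ may be subdivided and can contain \emph{more} than $\mu_\tau$ rays in total. What is true is that any single maximal simplicial cone $\sigma$ has at most $\mu_\tau$ of its rays in the normal cone of $F$ (since its rays are linearly independent and the normal cone has dimension $\mu_\tau$), and at least one such $\sigma$ attains this number (take any $\sigma$ having a $\mu_\tau$-dimensional face inside the normal cone of $F$). Combined with the ordering in Fact~\ref{fact:local-rlct}, which picks the chart with the \emph{largest} multiplicity among those realizing the minimal pole, this gives $\mu_\tau$. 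Your ``main obstacle'' paragraph already anticipates this issue but the resolution you give is not quite the correct one.
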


\begin{example}\label{example:linear-space}
    We compute the real log canonical threshold of a linear subspace of codimension $c$ in $\R^d$ as given by the monomial ideal $\mathcal{I} = \langle \omega_1,\ldots,\omega_c\rangle$. As the phase function, we use $1 = \omega^0$, so that $\tau = 0$. The Newton polyhedron of $\mathcal{I}$ is 
    \[
    \mathcal{P}(\mathcal{I}) = \conv \{e_i + \xi \mid i \in [c], \xi \in \R^d_{\geq 0}\} \subseteq \R^d,
    \]
    where $e_i$ denotes the $i$-th unit vector. Clearly, all points $a \in \mathcal{P}(\mathcal{I})$ satisfy $\sum_{i = 1}^c a_i \geq 1$.
    Therefore, the set $F = \{ (a_1,\ldots,a_d) \in \R^d_{\geq 0} \mid \sum_{i = 1}^c a_i = 1 \}$ forms a facet of $\mathcal{P}(\mathcal{I})$ and a point of the form $(t,\ldots,t)$ lies in $F$ if and only if $t = 1/c$, and this point is contained in the relative interior of $F$. Hence, $\delta_0 = 1/c$ and $\mu_0 = 1$ which implies $\rlct_0(\langle \omega_1,\ldots,\omega_c\rangle;1) = (c,1)$ by Fact~\ref{fact:monomial}.

\end{example}

\subsection{Calculation rules}\label{subsection:chain-sum-product-rules}
We now gather further techniques to manipulate ideals and RLCTs.
The first result, referred to as ``chain rule'' in~\cite{lin:phd} is central to many arguments in this paper.
The following statement that we will use without further reference is a slight generalization of~\cite[Proposition 8]{Lin2017Ideal} but the proof is exactly the same.

\begin{fact}[Chain rule]\label{fact:chain-rule}
     Let $\Omega \subseteq \R^d$  be open
     and $W \subseteq \Omega$ a compact semianalytic neighborhood of a point $x\in\Omega$. Let $\mathcal{I}=\langle f_1,\ldots,f_r\rangle$ be a finitely generated ideal of $\mathcal{A}(\Omega)$ and $ W = W_1 \cup \ldots \cup W_n \cup V$ a partition, where $V \subsetneqq W$ is an analytic variety and the $W_i$ are compact semianalytic subsets of dimension $d$. Let $M$ be a real analytic manifold and $\rho: M \to W$ a proper real analytic map whose restrictions $\rho^{-1}(W_i) \to W_i$ are real analytic isomorphisms, that is, bijective with real analytic inverse. Then
    \[
    \rlct_x(\mathcal{I};\phi) = \min_{y \in \rho^{-1}(x)}\rlct_y(\rho^*\mathcal{I};(\phi \circ \rho)\cdot\det\jac\rho),
    \]
    where $\rho^*\mathcal{I} = \{g \circ \rho \mid g \in \mathcal{I}\} = \langle f_1 \circ \rho, \ldots, f_r \circ \rho \rangle \subseteq \mathcal{A}(M)$ is the pullback of $\mathcal{I}$ under $\rho$ and $\jac \rho$ is the Jacobian matrix of~$\rho$.
\end{fact}

In this paper, $M$ will always be a subset of a real algebraic variety that is locally (algebraically) isomorphic to an affine space and $\rho$ will be a polynomial map.

We review two further calculation rules.

\begin{fact}[Sum and Product Rule]\cite[Proposition 7]{Lin2017Ideal}\label{fact:sum-product-rule}
    Let $\Omega_1 \subseteq \R^{d_1}$ and $\Omega_2 \subseteq \R^{d_2}$ be compact subsets, and let $\mathcal{I} \subseteq \mathcal{A}(\Omega_1)$ and $\mathcal{J} \subseteq \mathcal{A}(\Omega_2)$ be finitely generated ideals. Then, composing with the canonical projections $\Omega_1 \times \Omega_2 \to \Omega_i$, we can consider $\mathcal{I}$ and $\mathcal{J}$ as ideals of $\mathcal{A}(\Omega_1 \times \Omega_2)$. Let $\phi_1: \Omega_1 \to \R$ and $\phi_2: \Omega_2 \to \R$ be nearly analytic. Denote $\rlct_{\Omega_1 }(\mathcal{I};\phi_1) = (\ell_1,m_1)$ and $\rlct_{\Omega_2}(\mathcal{J};\phi_2) = (\ell_2,m_2)$.  Then
    \begin{enumerate}
        \item $\rlct_{\Omega_1 \times \Omega_2}(\mathcal{I} + \mathcal{J};\phi_1\cdot \phi_2) = (\ell_1+\ell_2,m_1+m_2-1)$ and
        \item $\rlct_{\Omega_1 \times \Omega_2}(\mathcal{IJ};\phi_1\cdot \phi_2) = 
        \begin{cases}
            (\ell_1,m_1) \text{ if } \ell_1 < \ell_2, \\
            (\ell_2,m_2) \text{ if } \ell_1 > \ell_2, \\
            (\ell_1,m_1 + m_2) \text{ if } \ell_1 = \ell_2.
        \end{cases}$
    \end{enumerate}
\end{fact}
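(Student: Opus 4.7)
The plan is to work directly with zeta functions and exploit the product structure of $\Omega_1\times\Omega_2$. For the product rule, pick generators $f_1,\dots,f_r$ of $\mathcal{I}$ and $g_1,\dots,g_s$ of $\mathcal{J}$, so that $\{f_i g_j\}_{i,j}$ generates $\mathcal{I}\mathcal{J}$. The algebraic identity
$$\sum_{i,j}\big(f_i(\omega_1)g_j(\omega_2)\big)^2 = \Big(\sum_i f_i(\omega_1)^2\Big)\Big(\sum_j g_j(\omega_2)^2\Big),$$
together with $|\phi_1\phi_2|=|\phi_1|\,|\phi_2|$ and Fubini's theorem, gives the clean factorization $\zeta_{\mathcal{I}\mathcal{J}}(z) = \zeta_\mathcal{I}(z)\,\zeta_\mathcal{J}(z)$. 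Since each factor is meromorphic with first pole $\ell_i$ of order $m_i$, the first pole of the product is the smaller of $\ell_1,\ell_2$ with the corresponding multiplicity, and when $\ell_1=\ell_2$ the orders add. This yields all three cases of the product rule.

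The sum rule is subtler because $\sum f_i^2+\sum g_j^2$ does not split. I would separate the two variable blocks through the Gamma-function identity
$$(A+B)^{-z/2} = \frac{1}{\Gamma(z/2)}\int_0^\infty t^{z/2-1}\,e^{-tA}\,e^{-tB}\,dt,$$
which, with $A=\sum f_i^2$ and $B=\sum g_j^2$, combined with Fubini yields
$$\zeta_{\mathcal{I}+\mathcal{J}}(z) = \frac{1}{\Gamma(z/2)}\int_0^\infty t^{z/2-1}\,L_\mathcal{I}(t)\,L_\mathcal{J}(t)\,dt,$$
where $L_\mathcal{K}(t)=\int e^{-t\sum h^2}|\phi|\,d\omega$ is the Laplace-type ``state density'' integral attached to the ideal $\mathcal{K}$. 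The Mellin-transform correspondence at the core of singular learning theory (cf.~\cite{watanabe:book}) pairs the first pole $(\ell,m)$ of the zeta function with the leading asymptotic $L(t)\sim c\,t^{-\ell/2}(\log t)^{m-1}$ as $t\to\infty$. Multiplying the asymptotics for $L_\mathcal{I}$ and $L_\mathcal{J}$ and reading off the Mellin transform in reverse places the first pole of $\zeta_{\mathcal{I}+\mathcal{J}}$ at $z=\ell_1+\ell_2$ with order $(m_1+m_2-2)+1 = m_1+m_2-1$; the $+1$ comes from the elementary fact that $\int_1^\infty t^{s-\lambda-1}(\log t)^k\,dt$ has a pole of order $k+1$ at $s=\lambda$. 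The prefactor $1/\Gamma(z/2)$ is holomorphic and nonzero on $\R_{>0}$, so it does not affect the leading pole.

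The main obstacle is rigorously justifying the asymptotic multiplication in the Mellin step, especially handling subleading terms in $L_\mathcal{I}(t)$ and $L_\mathcal{J}(t)$ that share the leading exponent. A more algebraic alternative avoids this: apply Hironaka's resolution of singularities to each factor separately to simultaneously monomialize the ideal and its amplitude, invoke the Chain Rule (Fact~\ref{fact:chain-rule}) to pass to a product chart on which $\mathcal{I}$ and $\mathcal{J}$ are (locally) principal monomial ideals $\langle u^\alpha\rangle$ and $\langle v^\beta\rangle$, and then compute RLCTs directly via Newton polyhedra (Fact~\ref{fact:monomial}). For $\langle u^\alpha v^\beta\rangle$ the polyhedron is a translated orthant $(\alpha,\beta)+\R_{\geq 0}^{d_1+d_2}$ and the product rule follows from a direct distance calculation. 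For $\langle u^\alpha,v^\beta\rangle$ one analyzes the tilted facet structure of $\conv\{(\alpha,0),(0,\beta)\}+\R_{\geq 0}^{d_1+d_2}$; the critical face through the diagonal query point picks up exactly one extra dimension from the convex-combination parameter, producing the characteristic $-1$ in the multiplicity $m_1+m_2-1$.
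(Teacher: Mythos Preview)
The paper does not supply a proof of this fact; it is quoted verbatim as \cite[Proposition~7]{Lin2017Ideal}, so there is nothing in the paper to compare your argument against.

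On its own merits your proposal is sound and follows the standard line (indeed, essentially Lin's). The product rule via the factorization $\zeta_{\mathcal{IJ}}=\zeta_{\mathcal I}\,\zeta_{\mathcal J}$ is exactly right; the one point you should make explicit is that when $\ell_1<\ell_2$ the value $\zeta_{\mathcal J}(\ell_1)$ is nonzero, which holds because the defining integral for $\zeta_{\mathcal J}$ converges and is strictly positive for all real $z<\ell_2$. For the sum rule, both routes you sketch are viable. The Mellin/Laplace argument can be made rigorous because resolution of singularities furnishes a complete asymptotic expansion of each $L_{\mathcal K}(t)$ with discrete exponents, so multiplying the expansions and reading off the Mellin pole is legitimate. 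Your ``algebraic alternative''---localize, resolve each factor to monomial form via Fact~\ref{fact:chain-rule}, and compute the Newton polyhedron of $\langle u^\alpha, v^\beta\rangle$---is closest to Lin's actual proof and is the cleanest way to see the $-1$ in the multiplicity.
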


\subsection{Blow-up along a linear subspace}\label{subsection:blow-up}
Blow-ups (over the real numbers) are a standard tool to construct maps $\rho$ that satisfy the conditions of the chain rule (Fact~\ref{fact:chain-rule}) and can, hence, be used for computations concerning RLCTs. 

In this paper, we will only use the blow-up along a linear subspace $L = \{x \in \R^d \mid x_1 = \ldots = x_c = 0\}$ of $\R^d$ defined by the vanishing of a subset of the coordinates. This blow-up is a real algebraic map $\rho: M \to \R^d$ from a $d$-dimensional real algebraic variety $M$. The map is an isomorphism outside of $L$ and pulls back $L$ to a real projective space $\mathbb{P}_\R^{d-1}$. The manifold $M$ can be covered by $c$ copies of $\R^d$ which we call charts and, as the RLCT is a local concept, it suffices to consider the restrictions of $\rho$ to these charts. 
For $i \in [c]$, the restriction to the $i$-th chart can be described as
\begin{align*}
        \rho_i: \R^d &\longrightarrow \R^d\\
                (x_1,\ldots,x_d) &\longmapsto (x_1x_i,\ldots,x_{i-1}x_i,x_i,x_{i+1}x_i,\ldots,x_cx_i,x_{c+1},\ldots,x_d).
\end{align*}
Its Jacobian determinant is given by $\det \jac \rho_i = x_i^{c-1}$. Note that, if $c = d$, then $L$ is just the origin in $\R^d$ in which case we call $\rho$ the blow-up at the origin.

\section{Singular Learning Theory for factor analysis}\label{section:preliminaries-factor-analysis}

In this section, we review key aspects of singular learning theory through the lens of the factor analysis model.

\subsection{Marginal likelihood and learning coefficients}

Let $X_1,\dots,X_n$ be a sample of independent and identically
distributed random vectors, with $X_i$ taking values in $\mathbb{R}^p$.  Without loss of generality, we assume the expectation
vectors $\mathbb{E}[X_i]$ to be zero.  Let
\begin{equation*}
S_n=\frac{1}{n} \sum_{i=1}^n X_iX_i^T
\end{equation*}
be the $p\times p$ sample covariance matrix, and let $\mathit{PD}_p$ denote the cone of positive definite $p\times p$ matrices.  Define the function 
\begin{equation}
  \label{eq:negloglik}
  f(\Sigma\,|\, S_n) = 
  \frac{p}{2}\log(2\pi) +\frac{1}{2}\log\det(\Sigma)
   + \frac{1}{2} 
    \trace\left( \Sigma^{-1}S_n \right), \qquad \Sigma \in\mathit{PD}_p.
\end{equation}
Then $-nf(\Sigma\,|\, S_n)$ is the Gaussian log-likelihood function, which maps a matrix $\Sigma\in\mathit{PD}_p$ to the logarithm of the joint density of $(X_1,\dots,X_n)$ when the $X_i$ are i.i.d.~multivariate normal with covariance matrix $\Sigma$.  

In the factor analysis model with $k$ factors the covariance matrix of the observations is given by the parametrization map from \eqref{eq:parametrization}, so $\Sigma=
\Sigma_k(\psi,\Lambda) = \diag(\psi) + \Lambda\Lambda^T$ with  $\psi=(\psi_1,\dots,\psi_p)\in
 \R_{ > 0}^p$ and 
$\Lambda=(\lambda_{ij})\in\mathbb{R}^{p\times k}$.  Suppose that, for a Bayesian treatment, we have chosen a prior
distribution for $(\psi,\Lambda)$ and that this prior distribution has Lebesgue density $\varphi_k(\psi,\Lambda)$ on
$ \R_{ > 0}^p\times \mathbb{R}^{p\times k}$.  In this paper we assume that the prior density $\varphi_k$ is everywhere positive, bounded, and smooth; compare, e.g., \cite{leung:2016}. 
The marginal likelihood of the $k$-factor model is now the integral
\begin{equation}
\label{eq:marginal-likelihood}
  L_{k,n}(S_n) = \\
  \int_{\mathbb{R}^{p\times k}}\int_{ \R_{ > 0}^p} 
e^{-n f\left(\Sigma_k(\psi,\Lambda)\,|\,S_n\right)}
  \varphi_k(\psi,\Lambda) \;  d\psi\,d\Lambda.
\end{equation}
While exact values of $L_{k,n}(S_n)$ are challenging to obtain, general results
from \cite[Chap.~6]{watanabe:book} describe the asymptotics of $L_{k,n}(S_n)$ as $n$ increases. 
Let 
$\Sigma_0=\Sigma_k(\psi_0,\Lambda_0)$ with $\psi_0\in \R_{ > 0}^p$
and $\Lambda_0\in\mathbb{R}^{p\times k}$ be the true covariance matrix of the multivariate normal observations $X_1,\dots,X_n$.
The negated and scaled log-likelihood function in \eqref{eq:negloglik} is minimized uniquely by $\Sigma=S_n$, and its minimal value is
\begin{equation}
\label{eq:ell-minimum}
f(S_n) := f(S_n\,|\,S_n) = \frac{p}{2}\log(2\pi)
+\frac{1}{2}\log\det(S_n) + \frac{p}{2}.
\end{equation}
From \cite[Chap.~6]{watanabe:book}, it follows that the marginal likelihood sequence obtained by varying $n$ satisfies
\begin{equation}
  \label{eq:watanabe}
  -\log L_{k,n}(S_n) = n f(\Sigma_0 | S_n) + \ell_k(\Sigma_0)
  \log(n) - \left[m_k(\Sigma_0)-1\right] \log\log(n) + O_{prob}(1),
\end{equation}
where $O_{prob}(1)$ denotes a sequence of random variables that is bounded
in probability.  
In \eqref{eq:watanabe}, $\ell_k(\Sigma_0)$ is a positive rational number and $m_k(\Sigma_0)$ is an integer, with the following terminology.

\begin{defn}
\label{def:learn-coeff}
  The coefficient $\ell_k(\Sigma_0)$ in (\ref{eq:watanabe}) is 
  the \emph{learning coefficient} of the $k$-factor model at the covariance matrix $\Sigma_0$, and $m_k(\Sigma_0)$ is its
  \emph{order} or \emph{multiplicity}.
\end{defn}

 The reader not so familiar with learning coefficients might take Theorem~\ref{fact:reduction} below (using the notation from Sections~\ref{subsection:rlct-ideals} and~\ref{subsection:fiber-ideals}) as a definition of $\ell_k(\Sigma_0)$ and $m_k(\Sigma_0)$.

As we revisit in Lemma~\ref{lemma:model-dimension}, it holds that $\ell_k(\Sigma_0)\in(0,d_k/2]$, see~\cite[Theorem 7.2]{watanabe:book}.  For the order, it holds by definition that  $m_k(\Sigma_0)\in [d_k] \coloneqq \{1,\dots,d_k\}$, see~\cite[p. 32]{watanabe:book}.  Here, $d_k$ is the dimension from \eqref{eq:dimension}.   Our notation suppresses any dependence of the learning coefficient and its order on the prior density $\varphi_k$.  Indeed, the assumed smoothness and positivity ensures that $\varphi_k$ is bounded above and bounded away from zero on every compact subset, which implies that $\ell_k(\Sigma_0)$ and $m_k(\Sigma_0)$ do not depend on the specific form of the prior~\cite[Lemma 3.8]{lin:phd}.

\subsection{Setup towards sBIC}\label{subsection:sbic}
The factor analysis model $\mathcal{M}_r$ with $r \leq k$ latent factors can be identified with a submodel of the $k$-factor model $\mathcal{M}_k$ as follows: As above, identify $\mathcal{M}_k$ with its space of covariance matrices $ \Sigma_k(\psi,\Lambda) = \Lambda \Lambda^T + \diag(\psi)$, where we vary $\Lambda \in \R^{p\times k}$ and $\psi \in \R_{ > 0}^p$. Now, the following conditions are equivalent for a covariance matrix $\Sigma_0 \in \mathcal{M}_k$, and, if they are satisfied, $\Sigma_0$ lies in the $r$-factor model $\mathcal{M}_r$:
\begin{enumerate}
    \item $\Sigma_0 = \Sigma_k(\psi,\Lambda)$ for some $\Lambda \in \R^{p\times k}$ of rank at most $r$ and some $\psi \in  \R_{ > 0}^p$;
    \item $\Sigma_0 = \Sigma_r(\psi,\Lambda) = \Sigma_k(\psi, [\Lambda \ 0])$ for some $\Lambda \in \R^{p\times r}$ and some $\psi \in  \R_{ > 0}^p$.
\end{enumerate}

 So, we have a (totally) ordered system of statistical models
\[
\mathcal{M}_0 \subset \mathcal{M}_1 \subset \ldots \subset \mathcal{M}_k,
\]
and hence have a marginal likelihood estimation and model selection problem where the \emph{singular Bayesian Information Criterion} (sBIC) of Drton and Plummer can be applied, see~\cite{sbic:2017}. For all choices $r \leq k$, there exists a Zariski open subset $U_{kr}$ of the space of $(p \times k)$-matrices of rank at most $r$ such that, for all $\Sigma_1 = \Sigma_k(\psi_1,\Lambda_1), \Sigma_2 = \Sigma_k(\psi_2,\Lambda_2) \in \mathcal{M}_r$ with $\Lambda_1,\Lambda_2 \in U_{kr}$, we have $\ell_k(\Sigma_1) = \ell_k(\Sigma_2)$ and $m_k(\Sigma_1) = m_k(\Sigma_2)$. We denote this (\textit{generic}) learning coefficient and order of $\mathcal{M}_k$ along $\mathcal{M}_r$ by $\ell_{kr}$ and $m_{kr}$, respectively, that is, $\ell_{kr} = \ell_k(\Sigma_0)$ and $m_{kr} = m_k(\Sigma_0)$, where $\Sigma_0 = \Lambda\Lambda^T + \diag(\psi) \in \mathcal{M}_r$ with $\Lambda \in U_{kr}$. Note that the definition of $\ell_{kr}$ and $m_{kr}$ is independent of the data $X_1,\ldots,X_n$. In order to apply the sBIC to model selection among the models $\mathcal{M}_0,\ldots,\mathcal{M}_k$, it suffices to compute the numbers $\ell_{sr}$ and $m_{sr}$ for all $r,s \in  \{0,\ldots,k\}$ with $r \leq s$. 

\subsection{Fiber ideals for factor analysis}\label{subsection:fiber-ideals}
The \textit{fiber ideal} of the $k$-factor model with $p$-dimensional observations at the covariance matrix $\Sigma_0 = (\sigma_{ij})$ is defined as
\[
\overline{\mathcal{I}}_{p,k}(\Sigma_0) = \mathcal{I}_{p,k}(\Sigma_0) + \mathcal{I}_{p,k}'(\Sigma_0),
\]
where
\[
\mathcal{I}_{p,k}(\Sigma_0) = \langle \lambda_i\lambda_j^T - \sigma_{ij} \mid 1 \leq i < j \leq p \rangle,
\]
\[
\mathcal{I}_{p,k}'(\Sigma_0) = \langle \lambda_i\lambda_i^T + \psi_i - \sigma_{ii} \mid 1 \leq i \leq p \rangle,
\]
$\Lambda$ is a $(p\times k)$-matrix of indeterminates with row vectors $\lambda_i$, and $\psi$ is a $p$-vector of indeterminates. 
Note that the \textit{partial fiber ideal} $\mathcal{I}_{p,k}(\Sigma_0)$ is an ideal of the ring $\mathcal{A}(\R^{p\times k})$ of functions that are analytic in every point of $\R^{p\times k}$.
Using the calculation rules for real log canonical thresholds of ideals from Section~\ref{section:preliminaries-rlct} will allow us to reduce the problem of  determining $\ell_k(\Sigma_0)$ and $m_k(\Sigma_0)$ to computing real log canonical thresholds of $\mathcal{I}_{p,k}(\Sigma_0)$. 
 This is crucial to many proofs throughout the paper. Indeed, all lemmas in Section~\ref{section:learning-coefficients} and~\ref{sec:distinct-singularities} are devoted to determining or giving upper bounds for $\min_\Lambda \rlct_\Lambda(\mathcal{I}_{p,k}(\Sigma_0);1)$ from Theorem~\ref{fact:reduction} and the corresponding results on learning coefficients then follow as corollaries.

\begin{theorem}\label{fact:reduction}
    Let $\Sigma_0\in \R^{p\times p}$ be a symmetric positive definite matrix lying in the $k$-factor model $\mathcal{M}_k$. Then the following holds for the learning coefficient $\ell_k(\Sigma_0)$ and its order $m_k(\Sigma_0)$ of $\mathcal{M}_k$:
    \[
    (2\ell_k(\Sigma_0),m_k(\Sigma_0)) = \min_{(\psi,\Lambda)}\rlct_{(\psi,\Lambda)}(\overline{\mathcal{I}}_{p,k}(\Sigma_0);1) = \min_{\Lambda} \rlct_\Lambda(\mathcal{I}_{p,k}(\Sigma_0);1) + (p,0),
    \]
    where the minima range over all $\Lambda \in \R^{p \times k}$ and $\psi \in  \R_{ > 0}^p$ with $\Sigma_k(\psi,\Lambda) = \Sigma_0$.
\end{theorem}
\begin{proof}
    First, we will transform the fiber ideal $\overline{\mathcal{I}}_{p,k}(\Sigma_0)$ by the following map:
\begin{align*}
    \rho: (\psi'_1,\ldots,\psi'_p,\lambda_1,\ldots,\lambda_p) \mapsto ( \psi_1' -\lambda_1\lambda_1^T  + \sigma_{11},\ldots,\psi_p' -\lambda_p\lambda_p^T  + \sigma_{pp},\lambda_1,\ldots,\lambda_p)
\end{align*}
The Jacobian matrix of $\rho$ is an upper triangular matrix with ones along the diagonal, so $\det \jac \rho = 1$. We compute the pullback of $\overline{\mathcal{I}}_{p,k}(\Sigma_0)$ as
\[
\rho^*\overline{\mathcal{I}}_{p,k}(\Sigma_0) = \mathcal{I}_{p,k}(\Sigma_0) + \mathcal{J},
\]
where $\mathcal{J} = \langle \psi_1',\ldots,\psi_p'\rangle$. Now, let $\phi = \varphi_k$ be the prior density which, as noted before, can be assumed identical to $1$~\cite[Lemma 1]{Lin2017Ideal}. It follows from the Chain Rule (Fact~\ref{fact:chain-rule}) that 
    \[
    \rlct_x(\overline{\mathcal{I}}_{p,k}(\Sigma_0);\varphi_k) = \rlct_x(\overline{\mathcal{I}}_{p,k}(\Sigma_0);1) =\min_{y \in \rho^{-1}(x)}\rlct_y(\mathcal{I}_{p,k}(\Sigma_0) + \mathcal{J};1)
    \]
for every $x = (\psi,\Lambda) \in  \R_{ > 0}^p \times \R^{p\times k}$. Moreover, Fact~\ref{fact:homogeneous} and Example~\ref{example:linear-space} show that the number $\rlct_{(\psi_1',\ldots,\psi_p')}(\mathcal{J};1)$ takes $(2\frac{p}{2},1) = (p,1)$ as its minimal value. Note for this, that $\mathcal{J}$ is a homogeneous ideal and that $(\psi_1',\ldots,\psi_p',\Lambda) = (0,\ldots,0,\Lambda)$ is always in the fiber $\rho^{-1}(\psi,\Lambda)$ for some $\Lambda$ because the diagonal entries $\sigma_{ii}$ of the positive semi-definite matrix $\Sigma_0$ are non-negative. 
Applying Fact~\ref{fact:sum-product-rule}(1) to the sum $\mathcal{I}_{p,k}(\Sigma_0) + \mathcal{J}$, we infer
\begin{equation}\label{equation:reduction}
\rlct_{(\psi,\Lambda)}(\overline{\mathcal{I}}_{p,k}(\Sigma_0);\varphi_k) = \rlct_{\Lambda}(\mathcal{I}_{p,k}(\Sigma_0);1) + (p,0)
\end{equation}
for all $(\psi,\Lambda) \in  \R_{ > 0}^p \times \R^{p\times k}$. Combining this with~\cite[Theorem 2]{Lin2017Ideal} gives the statement of the theorem.
\end{proof}

Another calculation rule is specific for factor analysis models and deals with multiplication of the fixed covariance matrix by a diagonal matrix.

\begin{lemma}
  \label{lem:torus}
  Let $\Gamma=\diag(\gamma)$ be a diagonal matrix given by a vector
  $\gamma\in(\mathbb{R}\setminus\{0\})^p$ with all entries non-zero.
  If $\Sigma_0 \in \R^{p\times p}$ is in the $k$-factor model then so is
  $\Gamma\Sigma_0\Gamma$ and moreover
  \[
  \ell_k(\Sigma_0)=\ell_k(\Gamma\Sigma_0\Gamma) \ \text{and} \
  m_k(\Sigma_0)=m_k(\Gamma\Sigma_0\Gamma).
  \]
\end{lemma}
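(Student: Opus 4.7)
The plan is to apply Fact~\ref{fact:reduction}, which reduces the computation of $(2\ell_k(\cdot),m_k(\cdot))$ to the minimum, with respect to the ordering~\eqref{eq:order}, of $\rlct_\Lambda(\mathcal{I}_{p,k}(\cdot);1)$ over admissible $\Lambda$. The key idea is that the linear automorphism $\rho\colon\Lambda\mapsto\Gamma\Lambda$ of $\R^{p\times k}$ identifies the reduced fiber ideal $\mathcal{I}_{p,k}(\Gamma\Sigma_0\Gamma)$ with $\mathcal{I}_{p,k}(\Sigma_0)$ up to nonzero scalings of the generators, and its Jacobian determinant is a nonzero constant.

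First I would check that $\Gamma\Sigma_0\Gamma$ lies in $\mathcal{M}_k$. From $\Sigma_0=\diag(\psi_0)+\Lambda_0\Lambda_0^T$ with $\psi_0\in\R_+^p$, one has
\[
\Gamma\Sigma_0\Gamma \;=\; \diag(\gamma_1^2\psi_{0,1},\ldots,\gamma_p^2\psi_{0,p}) \;+\; (\Gamma\Lambda_0)(\Gamma\Lambda_0)^T,
\]
and $\gamma_i^2\psi_{0,i}>0$ since $\gamma_i\neq 0$. Moreover, the map $\Lambda\mapsto\Gamma\Lambda$ is a bijection between the admissible parameter sets $\{\Lambda:\Sigma_k(\psi,\Lambda)=\Sigma_0\text{ for some }\psi\in\R_+^p\}$ and $\{\Lambda':\Sigma_k(\psi',\Lambda')=\Gamma\Sigma_0\Gamma\text{ for some }\psi'\in\R_+^p\}$, because $\diag(\psi)+\Lambda\Lambda^T=\Sigma_0$ is equivalent to $\diag(\gamma_i^2\psi_i)+(\Gamma\Lambda)(\Gamma\Lambda)^T=\Gamma\Sigma_0\Gamma$, and $\gamma_i^2\psi_i>0$ if and only if $\psi_i>0$.

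Next I would apply the chain rule (Fact~\ref{fact:chain-rule}) to $\rho$. Writing $\Sigma_0=(\sigma_{ij})$, so that $\Gamma\Sigma_0\Gamma=(\gamma_i\gamma_j\sigma_{ij})$, each generator of $\mathcal{I}_{p,k}(\Gamma\Sigma_0\Gamma)$ pulls back as
\[
(\gamma_i\lambda_i)(\gamma_j\lambda_j)^T - \gamma_i\gamma_j\sigma_{ij} \;=\; \gamma_i\gamma_j\bigl(\lambda_i\lambda_j^T - \sigma_{ij}\bigr),
\]
and since $\gamma_i\gamma_j\neq 0$, the pulled-back ideal equals $\mathcal{I}_{p,k}(\Sigma_0)$. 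The Jacobian determinant of $\rho$ is $(\prod_i\gamma_i)^k$, a nonzero constant whose absolute value is a nearly analytic amplitude bounded above and below by positive constants and therefore does not affect the RLCT. Since $\rho^{-1}(\Lambda)=\{\Gamma^{-1}\Lambda\}$ is a singleton, the chain rule yields
\[
\rlct_\Lambda\bigl(\mathcal{I}_{p,k}(\Gamma\Sigma_0\Gamma);1\bigr) \;=\; \rlct_{\Gamma^{-1}\Lambda}\bigl(\mathcal{I}_{p,k}(\Sigma_0);1\bigr).
\]

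Combining this identity with the bijection of admissible parameter sets from the second paragraph, the minimum appearing in Fact~\ref{fact:reduction} agrees for $\Sigma_0$ and for $\Gamma\Sigma_0\Gamma$. Both entries of the ordered pair therefore match, giving $\ell_k(\Sigma_0)=\ell_k(\Gamma\Sigma_0\Gamma)$ and $m_k(\Sigma_0)=m_k(\Gamma\Sigma_0\Gamma)$. There is no genuine obstacle in this argument; the only point requiring care is that the constant Jacobian can be absorbed into the amplitude without changing the RLCT, and that the generator-level scalars $\gamma_i\gamma_j$ disappear at the level of ideals precisely because they are nonzero.
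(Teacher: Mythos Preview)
Your proof is correct and follows essentially the same approach as the paper: apply the chain rule (Fact~\ref{fact:chain-rule}) to the linear automorphism $\Lambda\mapsto\Gamma\Lambda$, observe that the pullback of $\mathcal{I}_{p,k}(\Gamma\Sigma_0\Gamma)$ is $\mathcal{I}_{p,k}(\Sigma_0)$ because the nonzero scalars $\gamma_i\gamma_j$ drop out at the level of ideals, and note the Jacobian determinant is a nonzero constant. If anything, your write-up is more careful than the paper's: you explicitly verify $\Gamma\Sigma_0\Gamma\in\mathcal{M}_k$ and spell out the bijection between admissible parameter sets, so that the minima in Fact~\ref{fact:reduction} match.
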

\begin{proof}
 As $\Gamma$ is invertible, the off-diagonal entries of $\Gamma( \Lambda\Lambda^T - \Sigma_0)\Gamma = \Gamma \Lambda\Lambda^T\Gamma - \Gamma\Sigma_0\Gamma$ generate the same ideal as the off-diagonal entries of $\Lambda \Lambda^T - \Sigma_0$ in the ring $\mathcal{A}(\R^{p\times k}$), where $\Lambda$ is a $(p\times k)$-matrix of indeterminates, namely the ideal $\mathcal{I}_{p,k}(\Sigma_0)$. The Jacobian determinant of the map $\rho: \Lambda' \mapsto \Lambda = \Gamma^{-1}\Lambda' \Gamma^{-1}$ is the inverse of a monomial in the diagonal entries of $\Gamma$ and, hence, a non-zero constant. The result follows by the chain rule (Fact~\ref{fact:chain-rule}) because $\rho^* \mathcal{I}_{p,k}(\Sigma_0) = \mathcal{I}_{p,k}(\Gamma \Sigma_0 \Gamma)$.
\end{proof}

\begin{lemma}
  \label{lemma:model-dimension}
  Consider a $k$-factor model, where $p$ is the dimension of the observations.  
  \begin{enumerate}
      \item If $d_k \leq p(p+1)/2$ and the covariance matrix $\Sigma_0$ is chosen generically from the $k$-factor model, then the learning coefficient is $\ell_{kk} = \ell_k(\Sigma_0)=d_k/2$ and the order is $m_{kk} = m_k(\Sigma_0)=1$.
      \item If $r \leq k$ such that $d_r > p(p+1)/2$ and $\Sigma_0$ is chosen generically from the $k$-factor model, then $\ell_{kr} = \ell_{kk} = \ell_k(\Sigma_0) = p(p+1)/4$ and $m_{kr} = m_{kk} = m_k(\Sigma_0) = 1$.
  \end{enumerate}
\end{lemma}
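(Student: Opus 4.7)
The plan is to apply Fact~\ref{fact:reduction}: it suffices to compute $\min_{\Lambda}\rlct_{\Lambda}(\mathcal{I}_{p,k}(\Sigma_0);1)$, with $\Lambda$ ranging over matrices such that $\Sigma_k(\psi,\Lambda)=\Sigma_0$ for some $\psi\in\R_+^p$, and then add $(p,0)$. Write $\phi(\Lambda)=\Lambda\Lambda^T$ and let $\pi\colon\mathrm{Sym}(p)\to\R^{\binom{p}{2}}$ denote the linear projection onto strictly upper-triangular entries. Then $\mathcal{V}(\mathcal{I}_{p,k}(\Sigma_0))$ is exactly the fiber of $\pi\circ\phi$ over $\pi(\Sigma_0)$, and the $\binom{p}{2}$ generators of $\mathcal{I}_{p,k}(\Sigma_0)$ are, up to additive constants, the components of $\pi\circ\phi$.

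A Jacobian rank count at a generic $\Lambda$ of column rank $k$ gives $\mathrm{rank}\,d\phi_\Lambda=pk-\binom{k}{2}$, since the kernel of $d\phi_\Lambda$ is the tangent space to the $O(k)$-orbit through $\Lambda$. Composing with the linear $\pi$, the generic rank of $d(\pi\circ\phi)$ equals
\[
c:=\min\!\left(pk-\binom{k}{2},\,\binom{p}{2}\right)=\begin{cases} d_k-p & \text{in case (1)},\\ \binom{p}{2} & \text{in case (2)}. \end{cases}
\]
By generic smoothness for real polynomial maps, for generic $\Sigma_0\in\mathcal{M}_k$ the value $\pi(\Sigma_0)$ is a regular value of $\pi\circ\phi$ onto its image; hence $\mathcal{V}(\mathcal{I}_{p,k}(\Sigma_0))$ is a smooth real analytic submanifold of $\R^{p\times k}$ of codimension $c$ on which the differentials of the $\binom{p}{2}$ generators span a subspace of dimension $c$ at every point.

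Pick any fiber point $\Lambda_0$ with $\mathrm{diag}(\Sigma_0-\Lambda_0\Lambda_0^T)\in\R_+^p$; this is a nonempty open subset of the fiber, since $\Sigma_0\in\mathcal{M}_k$. The analytic implicit function theorem allows me to select $c$ generators with linearly independent differentials at $\Lambda_0$ and to change analytic coordinates on a neighborhood of $\Lambda_0$ so that these $c$ generators become the first $c$ coordinate functions $x_1,\ldots,x_c$. The remaining $\binom{p}{2}-c$ generators vanish on $\{x_1=\cdots=x_c=0\}$ and thus lie in $(x_1,\ldots,x_c)$, so the pullback of $\mathcal{I}_{p,k}(\Sigma_0)$ equals $(x_1,\ldots,x_c)$. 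Because the Jacobian determinant of this change of coordinates is nonzero, the chain rule (Fact~\ref{fact:chain-rule}) together with Example~\ref{example:linear-space} yield $\rlct_{\Lambda_0}(\mathcal{I}_{p,k}(\Sigma_0);1)=(c,1)$. Since the linearization rank is $c$ uniformly on the fiber, $(c,1)$ is also the minimum in Fact~\ref{fact:reduction}. Substituting gives $(2\ell_k(\Sigma_0),m_k(\Sigma_0))=(c+p,1)$, which evaluates to $(d_k,1)$ in case (1) and $(p(p+1)/2,1)$ in case (2), proving the claimed values of $\ell_k(\Sigma_0)$ and $m_k(\Sigma_0)=1$. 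In case (2), both $\mathcal{M}_r$ and $\mathcal{M}_k$ are semi-algebraic subsets of $\mathit{PD}_p$ of the full dimension $p(p+1)/2$ and therefore share a nonempty common open subset of $\mathit{PD}_p$; consequently, a generic element of $\mathcal{M}_r$ is a generic element of $\mathcal{M}_k$, and this yields $\ell_{kr}=\ell_{kk}=p(p+1)/4$ and $m_{kr}=m_{kk}=1$.

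The main obstacle will be the generic-smoothness step: confirming that for generic $\Sigma_0\in\mathcal{M}_k$, $\pi(\Sigma_0)$ is a regular value of $\pi\circ\phi$. This reduces to showing that the critical locus of $\pi\circ\phi$---the matrices $\Lambda$ of rank less than $k$ in case (1), or the locations where $\pi$ restricted to $\phi(\R^{p\times k})$ fails to be submersive in case (2)---has positive codimension in the image, so that generic $\Sigma_0$ avoids the corresponding set of critical values.
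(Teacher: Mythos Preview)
Your outline mirrors the paper's proof: both establish that the fiber of the parametrization over a generic $\Sigma_0$ is smooth of the expected codimension, straighten the defining ideal locally to a set of coordinate functions, and then apply Example~\ref{example:linear-space} together with Fact~\ref{fact:reduction}. The paper carries out the generic-smoothness step by complexifying $\Sigma_k$ to a morphism of complex varieties, invoking Hartshorne's generic smoothness and fiber-dimension theorems for dominant morphisms, and then passing back to real points and analytic local coordinates via the cited results of Mangolte and Artin.

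Your version of this step has two concrete gaps. First, the claim that the generic rank of $d(\pi\circ\phi)$ equals $\min\bigl(pk-\binom{k}{2},\binom{p}{2}\bigr)$ does not follow from $\mathrm{rank}\,d\phi_\Lambda=pk-\binom{k}{2}$ and linearity of $\pi$ alone: composing with a projection can lose further rank whenever $\mathrm{Im}(d\phi_\Lambda)$ meets $\ker\pi$ (the diagonal matrices) nontrivially. The desired equality is equivalent to the dimension formula~\eqref{eq:dimension} via the identity $\mathrm{rank}\,d\Sigma_k=p+\mathrm{rank}\,d(\pi\circ\phi)$, so you should cite~\eqref{eq:dimension} rather than treat this as automatic. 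Second, and more seriously, your description of the critical locus in case~(1) as ``the matrices $\Lambda$ of rank less than $k$'' is incorrect. For $k=1$, $p\ge 3$, and $\Lambda=e_1\in\R^{p}$, the matrix $\Lambda$ has full rank, yet $d\phi_\Lambda(h)=e_1h^T+he_1^T$ has nonzero off-diagonal entries only in the first row and column, so $\mathrm{rank}\,d(\pi\circ\phi)_\Lambda=p-1<p=c$. The critical locus therefore contains full-rank matrices, and the reduction you propose for the ``main obstacle'' does not go through as written. The clean remedy is precisely the paper's: complexify, use generic smoothness over $\C$ to conclude the scheme-theoretic fiber is smooth for $\Sigma_0$ in a Zariski-open set, and then restrict to real points.
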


\begin{proof}
    Throughout, we will use ideas from the proof of Theorem 2 in~\cite{drton:2007}.
    First note that, by~\cite[Theorem 2]{drton:2007},  if $r \leq k$ with $d_r > p(p+1)/2$, then $d_k > p (p+1)/2$ and the images of the parametrization maps $\Sigma_r$ and $\Sigma_k$ are Zariski dense in the cone of positive definite $(p\times p)$-matrices. Therefore, $\ell_{kr} = \ell_{kk}$ and $m_{kr} = m_{kk}$ in this case.

    Now we go to the general case where $k \in \{0,\ldots, p\}$. The parametrization map $\Sigma_k$ extends to a morphism $\Sigma_k: \A_\C^p \times \A^{p\times k}_\C \to \overline{\mathcal{M}_k}$ of schemes of finite type over $\C$, where $\overline{\mathcal{M}_k}$ is the complex Zariski closure of the $k$-factor model, which has dimension $D_k \coloneqq\min\{d_k,p(p+1)/2\}$. By~\cite[Corollary 10.7]{Hartshorne} and~\cite[Theorem 10.2]{Hartshorne}, the fiber of generic $\Sigma_0 \in \overline{\mathcal{M}_k}$ is smooth over $\C$ of codimension $D_k$. Clearly, for $\Sigma_0 \in \mathcal{M}_k$ the corresponding fiber has a real point in $ \R_{ > 0}^p \times \R^{p\times k}$, just by the definition of $\mathcal{M}_k$. So, if $\Sigma_0 \in \mathcal{M}_k$ is chosen generically, this real point is a smooth point of the fiber, which implies that the dimension of the set of points of the fiber that lie in $ \R_{ > 0}^p \times \R^{p\times k}$ also has (real) codimension $D_k$~\cite[Theorem 2.2.9]{Mangolte}, and it is smooth over $\R$. 
    Consequently, using~\cite[Corollary 1.6]{Artin-analytic}, for each point in $ \R_{ > 0}^p \times \R^{p\times k}$ on the fiber, there exists a (Euclidean) neighborhood $U$ and a real analytic isomorphism $\R^p \times \R^{p\times k} \to U$ that transforms the defining ideal $\overline{\mathcal{I}}_{p,k}(\Sigma_0)$ to an ideal generated by $D_k$ variables. The statement now follows by Example~\ref{example:linear-space}, (\ref{equation:reduction}) and Theorem~\ref{fact:reduction}.
\end{proof}

\subsection{LQ-decomposition}\label{subsection:LQ-decomp} We use a specific type of transformation for $\mathcal{I}_{p,k}(\Sigma_0)$ to compute the local RLCTs from Theorem~\ref{fact:reduction}.
 Denote by $\mathcal{L}^{p\times k}_{r,+}$ the space of real $(p\times k)$-matrices of the form 
\begin{equation}\label{equ:lambda'}
    \Lambda' = \begin{bmatrix}
    \Lambda'_{11} & 0 \\
    \Lambda'_{21} & \Lambda'_{22}
\end{bmatrix}
\end{equation}
with $\Lambda'_{11}$ a lower triangular $r \times r$ matrix with positive diagonal entries.

\begin{fact}\label{fact:LQ}
    Let $\Sigma_0 \in \R^{p\times p}$ be in the $k$-factor model $\mathcal{M}_k$, and let $r \in \{0,\ldots,k\}$ be the minimum rank of any matrix $\Lambda \in \R^{p\times k}$ such that $\Sigma_0 = \diag(\psi) + \Lambda \Lambda^T$ for some $\psi \in  \R_{ > 0}^p$. Then
    \[
    \min_\Lambda \rlct_\Lambda(\mathcal{I}_{p,k}(\Sigma_0);1) = \min_{\Lambda'} \rlct_{\Lambda'}(\mathcal{I}_{p,k}(\Sigma_0);1),
    \]
     where the minima range over all $\Lambda \in \R^{p \times k}$ and $\Lambda' \in \mathcal{L}^{p\times k}_{r,+}$ respectively, with $\Sigma_k(\psi,\Lambda) = \Sigma_0 = \Sigma_k(\psi',\Lambda')$ for some $\psi, \psi' \in  \R_{ > 0}^p$, and the elements of $\mathcal{I}_{p,k}(\Sigma_0)$ on the right side of the equation are considered as functions on $\mathcal{L}^{p\times k}_{r,+}$.
\end{fact}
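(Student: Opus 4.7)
The plan is to exploit that the ideal $\mathcal{I}_{p,k}(\Sigma_0)$ is invariant under the right action of $O(k)$ on $\R^{p\times k}$ and then use the real LQ decomposition to move each relevant $\Lambda$ into $\mathcal{L}^{p\times k}_{r,+}$. This is entirely analogous in spirit to the argument used in Lemma~\ref{lem:torus} for the diagonal group, but now for the (continuous, nonabelian) orthogonal group.

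First I would record the $O(k)$-invariance: for any fixed $Q \in O(k)$, the linear map $\tau_Q : \R^{p\times k} \to \R^{p\times k}$, $\Lambda \mapsto \Lambda Q$, is a real analytic isomorphism with $\det\jac\tau_Q = (\det Q)^p = \pm 1$. Since $(\Lambda Q)(\Lambda Q)^T = \Lambda\Lambda^T$, the generators $\lambda_i\lambda_j^T - \sigma_{ij}$ of $\mathcal{I}_{p,k}(\Sigma_0)$ pull back to themselves, so $\tau_Q^*\mathcal{I}_{p,k}(\Sigma_0) = \mathcal{I}_{p,k}(\Sigma_0)$. Combined with Fact~\ref{fact:chain-rule}, this gives $\rlct_{\Lambda Q}(\mathcal{I}_{p,k}(\Sigma_0);1) = \rlct_\Lambda(\mathcal{I}_{p,k}(\Sigma_0);1)$.

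Next I would invoke the LQ decomposition. Given any $\Lambda \in \R^{p\times k}$ with $\Sigma_k(\psi,\Lambda) = \Sigma_0$ and whose first $r$ rows are linearly independent, standard LQ produces $Q \in O(k)$ with $\Lambda Q^T \in \mathcal{L}^{p\times k}_{r,+}$ (the positivity of the diagonal of $\Lambda'_{11}$ is achieved by absorbing signs into $Q$, and the vanishing of the upper-right $r \times (k-r)$ block follows from the lower trapezoidal form). Combined with the invariance just noted, this shows $\rlct_\Lambda(\mathcal{I}_{p,k}(\Sigma_0);1) = \rlct_{\Lambda Q^T}(\mathcal{I}_{p,k}(\Sigma_0);1)$ where the right-hand side is the RLCT on $\R^{p\times k}$ at a point of $\mathcal{L}^{p\times k}_{r,+}$.

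To bridge the RLCT on the ambient $\R^{p\times k}$ and the RLCT on the lower-dimensional $\mathcal{L}^{p\times k}_{r,+}$, I would build a local parametrization $\rho : \mathcal{L}^{p\times k}_{r,+} \times \mathcal{U} \to \R^{p\times k}$, $(\Lambda',Q) \mapsto \Lambda' Q$, where $\mathcal{U}$ is a neighborhood of the identity in $O(k)$ realized via a polynomial chart (e.g., a Cayley parametrization or exponential coordinates). Near a generic $\Lambda'_* \in \mathcal{L}^{p\times k}_{r,+}$ with $\Lambda'_{22}$-block of full column rank, $\rho$ restricted to a transversal complement of the stabilizer is a real analytic isomorphism onto an open neighborhood of $\Lambda'_*$ in $\R^{p\times k}$; its Jacobian determinant is a polynomial in the entries that is nonvanishing at $\Lambda'_*$ (it factors through the diagonal entries of $\Lambda'_{11}$ and the full-rank condition on $\Lambda'_{22}$). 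Applying Fact~\ref{fact:chain-rule} to $\rho$ and noting that $\rho^*\mathcal{I}_{p,k}(\Sigma_0) = \mathcal{I}_{p,k}(\Sigma_0)$ depends only on the $\Lambda'$-variables, I can integrate out the compact $O(k)$-chart coordinates (since the resulting amplitude factor is bounded above and below away from zero) to obtain
\[
\rlct_{\Lambda' Q}(\mathcal{I}_{p,k}(\Sigma_0);1) = \rlct_{\Lambda'}(\mathcal{I}_{p,k}(\Sigma_0);1),
\]
where the left-hand RLCT is on $\R^{p\times k}$ and the right-hand one on $\mathcal{L}^{p\times k}_{r,+}$. Together with the LQ step, this gives both inequalities $\min_\Lambda \leq \min_{\Lambda'}$ and $\min_{\Lambda'} \leq \min_\Lambda$.

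The main obstacle is the technical handling of the degenerate locus where the LQ decomposition fails, namely $\Lambda$ whose first $r$ rows are linearly dependent (which cannot be repaired by the $O(k)$-action, since row dependencies are preserved under right multiplication). I would handle this by showing that it suffices to take the minimum over rank-$r$ matrices $\Lambda$ (since the minimum rank is $r$ by hypothesis) whose first $r$ rows can be made linearly independent after harmlessly permuting coordinates of the observations; by the symmetry of the problem under simultaneous row/column permutations of $\Sigma_0$ (which preserves the $k$-factor model and the RLCT, analogously to Lemma~\ref{lem:torus}), the minimum is unchanged. A second technicality is verifying that the amplitude function $|\det\jac\rho|$ obtained after chain rule is indeed bounded above and away from zero on a compact neighborhood, so that its integration over the $O(k)$-chart does not alter the RLCT; this follows from the explicit form of the Jacobian near the chosen base point.
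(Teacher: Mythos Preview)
Your overall strategy coincides with the paper's: exploit that $\mathcal{I}_{p,k}(\Sigma_0)$ is invariant under the right $O(k)$-action, use LQ decomposition to move into $\mathcal{L}^{p\times k}_{r,+}$, and reduce to full-rank first-$r$-rows by a harmless row permutation. Steps~1 and~2 of your plan are exactly what the paper does.

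The gap is in your bridge between the ambient RLCT and the RLCT on $\mathcal{L}^{p\times k}_{r,+}$. Your map $\rho:\mathcal{L}^{p\times k}_{r,+}\times\mathcal{U}\to\R^{p\times k}$, $(\Lambda',Q)\mapsto\Lambda'Q$, cannot be a local analytic isomorphism when $r<k-1$, because
\[
\dim\mathcal{L}^{p\times k}_{r,+}+\dim O(k)-pk=\tbinom{k-r}{2}>0,
\]
so the domain is strictly larger than the target. Your proposed remedy---restrict to a transversal complement of the stabilizer, assuming $\Lambda'_{22}$ has full column rank---fails precisely where it is needed: any $\Lambda'\in\mathcal{L}^{p\times k}_{r,+}$ in the fiber of rank exactly $r$ (and $r$ is the \emph{minimum} rank, so these are the points where the minimum RLCT is expected) must have $\Lambda'_{22}=0$, since the first $r$ columns already have rank $r$ via the invertible $\Lambda'_{11}$. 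At such points the stabilizer is all of $O(k-r)$, and its dimension is not locally constant, so no smooth transversal exists.

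The paper sidesteps this by working in the opposite direction. Rather than surjecting $\mathcal{L}^{p\times k}_{r,+}\times O(k)$ onto $\R^{p\times k}$, it builds an explicit analytic embedding $\delta:\Omega\to\mathcal{L}^{p\times k}_{r,+}\times O(k)$ on a neighborhood $\Omega$ of $\Lambda_*$: QR on $\Lambda_{11}^T$ yields $\Lambda'_{11}$ and $Q_{11}$, then $Q_{12}=(\Lambda'_{11})^{-1}\Lambda_{12}$, and finally $[Q_{11}\ Q_{12}]$ is completed to $Q\in O(k)$ by a \emph{fixed analytic choice} of orthonormal basis for the orthogonal complement. This last step is the crucial difference from your plan: it breaks the $O(k-r)$ ambiguity once and for all, making $\delta$ an analytic isomorphism from the $pk$-dimensional $\Omega$ onto a $pk$-dimensional submanifold $\delta(\Omega)$. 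The chain rule then applies to $\mu=\rho|_{\delta(\Omega)}$ with no stabilizer bookkeeping, and since $\mu^*\mathcal{I}_{p,k}(\Sigma_0)$ depends only on the $\Lambda'$-coordinates, the conclusion follows.
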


\begin{proof}
First, we fix $\Lambda_* \in \R^{p\times k}$ with $\Sigma_0 = \diag(\psi) + \Lambda_* \Lambda_*^T$, so that $\rank \Lambda_* \in \{r,r+1,\ldots,k\}$ and, hence, some $(r\times r)$-minor of $\Lambda_*$ is non-zero. The linear transformation that permutes the rows and columns of $\Lambda_*$ such that the non-vanishing $(r\times r)$-minor is transformed into the determinant of the matrix consisting of the first $r$ rows and columns of a $(p\times k)$-matrix has non-zero constant Jacobian determinant. So, without changing $\rlct_{\Lambda_*}(\mathcal{I}_{p,k}(\Sigma_0))$, we can assume that
\[
\Lambda_* = 
\begin{bmatrix}
    \Lambda_{*11} & \Lambda_{*12}\\
    \Lambda_{*21} & \Lambda_{*22}
\end{bmatrix},
\]
where $\Lambda_{*11} \in \R^{r\times r}$ is of full rank. Furthermore, as the non-vanishing of a minor defines a Zariski open set, we can find a (compact) neighbourhood $\Omega \subseteq \R^{p\times k}$ of $\Lambda_*$ such that 
\[
\Lambda = 
\begin{bmatrix}
    \Lambda_{11} & \Lambda_{12}\\
    \Lambda_{21} & \Lambda_{22}
\end{bmatrix}
\]
with $\Lambda_{11} \in \R^{r\times r}$ of full rank, for all $\Lambda \in \Omega$. The considerations of~\cite[Example 4.1.2]{Absil:2008} show that QR-decomposition is an analytic isomorphism of full-rank square matrices, so that we can write 
$\Lambda_{11}^T = Q_{11}^T (\Lambda'_{11})^T$ with $Q_{11} \in O(r)$, the space of real $(r\times r)$ orthogonal matrices, and $\Lambda'_{11} = (\lambda'_{ij})$ is a real $(r\times r)$ lower triangular matrix with $\lambda'_{ii} > 0$ for all $i\in \{1,\ldots,r\}$, where $Q_{11}$ and $\Lambda'_{11}$ are uniquely determined and depend analytically on $\Lambda$. After transposition, $\Lambda_{11} = \Lambda'_{11} Q_{11}$. Define $Q_{12} = (\Lambda'_{11})^{-1} \Lambda_{12} \in \R^{r \times (k-r)}$, so that 
\[
\begin{bmatrix}
    \Lambda_{11} & \Lambda_{12}
\end{bmatrix} =
\Lambda_{11}'
\begin{bmatrix}
    Q_{11} & Q_{12}
\end{bmatrix}.
\]
The matrix $\begin{bmatrix}
    Q_{11} & Q_{12}
\end{bmatrix}$ has orthonormal rows by ~\cite[Theorem A9.8]{muirhead:1982}, and note that $Q_{12}$ depends analytically on $\Lambda$. Now fix any analytic way of finding an orthonormal basis of the orthogonal complement of a given $r$-dimensional subspace of $\R^k$ and, in this way, complete to an orthogonal matrix
\[
Q = \begin{bmatrix}
    Q_{11} & Q_{12}\\
    Q_{21} & Q_{22}
\end{bmatrix}
\in O(k).
\]
Set 
\begin{equation*}\label{equ:lambda'}
    \Lambda' = \begin{bmatrix}
    \Lambda'_{11} & 0 \\
    \Lambda'_{21} & \Lambda'_{22}
\end{bmatrix}
\in \mathcal{L}^{p\times k}_{r,+}.
\end{equation*}
where $\Lambda'_{21}$ and $\Lambda'_{22}$ are uniquely determined by $\Lambda' = \Lambda Q^T = \Lambda Q^{-1}$. We found a decomposition $\Lambda = \Lambda'Q$ for $\Lambda \in \Omega$, where $\Lambda' \in \mathcal{L}^{p\times k}_{r,+}$ and $Q \in O(k)$ are uniquely determined and depend analytically on $\Lambda$. Denote this decomposition as
\begin{align*}
    \delta: \Omega \to \mathcal{L}^{p\times k}_{r,+} \times O(k),\;
    \Lambda \mapsto  (\Lambda',Q),
\end{align*}
and note the first projection of $\delta(\Omega)$ is a full dimensional set because the projection is an open map.
This $\delta$ is the analytic inverse of the real analytic isomorphism
\begin{align}\label{equ:multiplication-map}
    \mu: \delta(\Omega) \to \Omega,\;
            (\Lambda',Q) \mapsto \Lambda'Q
\end{align}
The Jacobian determinant of $\mu$ is a unit in the ring of real analytic functions by the very fact that $\mu$ is a real analytic isomorphism. As $\Lambda'(\Lambda')^T = \Lambda Q Q^T \Lambda^T = \Lambda \Lambda^T$ for each $\Lambda \in \Omega$, we see that the pullback $\mu^*\mathcal{I}_{p,k}(\Sigma_0)$ is generated by the same functions as $\mathcal{I}_{p,k}(\Sigma_0)$ itself replacing $\lambda_{ij}$ by $\lambda_{ij}'$. Using that the Jacobian determinant of $\mu$ is strictly positive on a neighbourhood of $(\Lambda',Q)$, that $Q$ is already determined by $\Lambda'$, and that $\delta(\Omega)$ is full-dimensional yields the result.
\end{proof}

\section{Learning coefficients for factor analysis models}\label{section:learning-coefficients}

\subsection{Diagonal covariance matrices}\label{subsection:diagonal}

We begin by studying the instance when the given covariance matrix $\Sigma_0$ is diagonal and therefore lies in the zero factor model. In this case, the partial fiber ideal is independent of $\Sigma_0$ and given by
\begin{equation*}
  \mathcal{I}_{p,k,0} = \mathcal{I}_{p,k}(\Sigma_0)=  \langle \lambda_i\lambda_j^T \mid 1\le i<j\le p\rangle,
\end{equation*}
see Section~\ref{subsection:fiber-ideals}. Specializing to the case $k=1$ gives
\begin{equation*}
  \label{eq:k1-1-ideal}
  \mathcal{I}_{p,1,0}= \langle
    \lambda_i\lambda_j \mid 1\le  i<j\le p \rangle
\end{equation*}
which is a monomial ideal.

\begin{lemma}\label{lemma:k=1,r=0}
    Let $\Sigma_0\in \R^{p\times p}$ be a diagonal matrix with positive diagonal entries. Then
    \[
    \min_{\Lambda} \rlct_\Lambda(\mathcal{I}_{p,1,0};1) = \rlct_0(\mathcal{I}_{p,1,0};1) = (p/2,1),
    \]
    where the minimum ranges over all $\Lambda \in \R^{p\times 1}$ with $\diag(\psi) + \Lambda\Lambda^T = \Sigma_0$ for some $\psi \in  \R_{ > 0}^p$.
\end{lemma}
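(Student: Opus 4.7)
The plan is to combine the locality of the RLCT (Fact~\ref{fact:local-rlct}) with the Newton-polyhedron recipe for monomial ideals (Fact~\ref{fact:monomial}) to compute $\rlct_0$, and then to rule out all competing points by a local linearization.

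First, I would reduce the minimization to the real variety of $\mathcal{I}_{p,1,0}$: a point $\Lambda = (\lambda_1,\ldots,\lambda_p)^T$ lies in the fiber iff $\lambda_i\lambda_j = 0$ for all $i \neq j$, so at most one coordinate $\lambda_i$ is non-zero (with the additional inequality $\lambda_i^2 < \sigma_{ii}$ ensuring $\psi_i > 0$). Hence the candidates for the minimum are the origin and points on the $p$ coordinate axes.

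Second, I would compute $\rlct_0(\mathcal{I}_{p,1,0}; 1)$ via Fact~\ref{fact:monomial}. The Newton polyhedron is $\mathcal{P} = \conv\{e_i + e_j + \xi : 1 \leq i < j \leq p,\ \xi \in \R^p_{\geq 0}\}$. Every point of $\mathcal{P}$ has coordinate sum at least $2$, so the smallest $t$ with $t(1,\ldots,1) \in \mathcal{P}$ satisfies $tp \geq 2$, i.e., $\delta_0 = 2/p$. Explicitly, $(2/p,\ldots,2/p) = \tfrac{2}{p(p-1)}\sum_{i<j}(e_i + e_j)$ is a convex combination of generators and sits in the relative interior of the facet $\{a \in \mathcal{P} : \sum_i a_i = 2\}$ (all coordinates are strictly positive), so $\mu_0 = 1$. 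Thus $\rlct_0(\mathcal{I}_{p,1,0}; 1) = (p/2, 1)$.

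Third, for a non-origin candidate $\Lambda^* \in \mathcal{V}(\mathcal{I}_{p,1,0})$ with some $\lambda_i^* \neq 0$, I would observe that $\lambda_i$ is a unit in the local analytic ring at $\Lambda^*$. Hence each generator $\lambda_i\lambda_j$ with $j \neq i$ produces $\lambda_j$ locally, and each $\lambda_m\lambda_n$ with $m,n \neq i$ already lies in $\langle \lambda_j : j \neq i\rangle$, so on a small neighborhood of $\Lambda^*$ the ideal equals $\langle \lambda_j : j \neq i\rangle$. After translating $\Lambda^*$ to the origin (a change of variables with constant non-zero Jacobian, so innocuous by Fact~\ref{fact:chain-rule}), Example~\ref{example:linear-space} gives $\rlct_{\Lambda^*}(\mathcal{I}_{p,1,0}; 1) = (p-1, 1)$. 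Since $p/2 \leq p-1$ for $p \geq 2$, the order of~(\ref{eq:order}) yields $(p/2, 1) \leq (p-1, 1)$, so the minimum is attained at the origin.

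The main subtlety is the Newton-polyhedron calculation, specifically verifying that $(2/p,\ldots,2/p)$ lies in the \emph{relative interior} of a single facet of $\mathcal{P}$ so that the multiplicity is exactly $1$; the remaining steps are direct applications of the tools assembled in Section~\ref{section:preliminaries-rlct}.
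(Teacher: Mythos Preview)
Your proof is correct and the Newton-polyhedron computation of $\rlct_0(\mathcal{I}_{p,1,0};1)$ is identical to the paper's. The only difference lies in the first equality: the paper invokes Fact~\ref{fact:homogeneous} in one line (the ideal is homogeneous, so the minimum over the fiber is automatically attained at the origin), whereas you determine the fiber variety explicitly as the union of coordinate axes and compute the RLCT at every non-origin point by local linearization, obtaining $(p-1,1)$. Your route is more hands-on and produces the extra datum $\rlct_{\Lambda^*}(\mathcal{I}_{p,1,0};1)=(p-1,1)$ at axis points; the paper's route is shorter and avoids the case analysis. Both arguments are valid.

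One remark on the subtlety you flag: your criterion ``all coordinates strictly positive'' for relative interior of the facet $\{a\in\mathcal P:\sum_i a_i=2\}$ is adequate for $p\ge 3$ (the face is then the hypersimplex $\Delta(2,p)$, whose boundary is cut out by $a_i=0$ and $a_i=1$, and $2/p\in(0,1)$). For $p=2$ the face degenerates to the single vertex $(1,1)$ of codimension~$2$, so the multiplicity there is~$2$; the paper's proof shares this tacit restriction to $p\ge 3$.
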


\begin{proof}
    The first equality follows by Fact~\ref{fact:homogeneous}. For the second, we consider the Newton polyhedron method that we recalled in Section~\ref{subsection:newton-polyhedra}. We write $\mathcal{I} = \mathcal{I}_{p,1,0}$. The Newton polyhedron of $\mathcal{I}$ is the convex positive real cone
    \[
    \mathcal{P}(\mathcal{I}) = \conv \left\{
     e_i+e_j+\xi \ \bigg\vert \ 1\le i<j\le p,\: \xi\in \R^p_{\geq 0}
    \right\}
    \]
    generated by the exponent vectors $e_i + e_j$ of the monomial generators of $\mathcal{I}$, where $e_i$ denotes the $i$-th unit vector. Clearly, all points in $x \in \mathcal{P}(\mathcal{I})$ satisfy the inequality
\[
\sum_{i=1}^m x_i \ge 2.
\]
The set of points where equality holds, that is, the convex hull of the vectors $e_i + e_j$, forms a facet of $\mathcal{P(I)}$.
This facet contains the point $(2/p, \ldots, 2/p)$ in its relative
interior.  Hence, for $\tau=0$ which is the exponent vector of the face function $1$ considered as a monomial, the $\tau$-distance of
$\mathcal{P(I)}$ is $2/p$ and the $\tau$-multiplicity is one. The statement of the lemma now follows from Fact~\ref{fact:monomial}.
\end{proof}

\begin{cor}
  \label{prop:k=1,r=0}
  The learning coefficient and its order of the factor analysis model with $p$-dimensional observations and $k = 1$ latent factor along the submodel with $r = 0$ latent factors satisfy
  \[
  \ell_{10} = \ell_1(\Sigma_0) = \frac{3p}{4} \quad\text{and}\quad m_{10} = m_1(\Sigma_0) = 1,
  \]
  where $\Sigma_0$ is any diagonal matrix with positive real diagonal entries.
\end{cor}

\begin{proof}
This follows directly from Lemma~\ref{lemma:k=1,r=0} and Theorem~\ref{fact:reduction}.
\end{proof}

\begin{lemma}\label{lemma:r=0}
     If $p\ge k+2$, then
  \[
    \min_\Lambda \rlct_\Lambda(\mathcal{I}_{p,k,0}; 1) = \rlct_0(\mathcal{I}_{p,k,0}; 1) = ({pk}/{2},1),
  \]
  where the minimum ranges over all $\Lambda \in \R^{p\times k}$ with $\diag(\psi) + \Lambda\Lambda^T = \Sigma_0$ for some $\psi \in  \R_{ > 0}^p$.
\end{lemma}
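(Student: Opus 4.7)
My plan is to prove the chain of equalities $\min_\Lambda \rlct_\Lambda(\mathcal{I}_{p,k,0}; 1) = \rlct_0(\mathcal{I}_{p,k,0}; 1) = (pk/2, 1)$, with the first following from a homogeneity argument and the second by induction on $k$. For the first equality, the generators $\lambda_i \cdot \lambda_j$ of $\mathcal{I}_{p,k,0}$ are homogeneous of degree $2$ in all $\lambda$-variables simultaneously, so Fact~\ref{fact:homogeneous} with $U = \{0\}$ gives $\rlct_0 \le \rlct_\Lambda$ for every $\Lambda \in \mathbb{R}^{p \times k}$; since $\Lambda = 0$ lies in the feasible set (take $\psi = \Sigma_0 \in \mathbb{R}^p_+$), the infimum is attained there and equals $\rlct_0$. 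The base case $k = 1$ of the induction is supplied by Lemma~\ref{lemma:k=1,r=0}.

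For the inductive step I would blow up the origin of $\mathbb{R}^{pk}$ as in Section~\ref{subsection:blow-up}. By the symmetry of $\mathcal{I}_{p,k,0}$ under permutations of rows and columns of $\Lambda$, all $pk$ blow-up charts behave identically, so it suffices to work in chart $(1,1)$, where $\rho_{11}$ has Jacobian $x_{11}^{pk-1}$ and a short computation gives $\rho_{11}^* \mathcal{I}_{p,k,0} = \langle x_{11}^2 \rangle \cdot \mathcal{J}$. Here $\mathcal{J}$ is generated by the $p-1$ ``linear-plus-quadratic'' polynomials $x_{j1} + \sum_{l \geq 2} x_{1l} x_{jl}$ (for $j \geq 2$) and the $\binom{p-1}{2}$ ``pure quadratic'' polynomials $\sum_l x_{il} x_{jl}$ (for $2 \leq i < j \leq p$). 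Two real-analytic changes of coordinates reduce $\mathcal{J}$ to a standard form: (i) the upper-triangular substitution $y_{j1} := x_{j1} + \sum_{l \geq 2} x_{1l} x_{jl}$ has unit Jacobian and turns the first batch of generators into the variables $y_{j1}$; (ii) writing $u = (x_{12}, \ldots, x_{1k})$ and $x_i' = (x_{i2}, \ldots, x_{ik})$, the substitution $\tilde x_i := A(u)\, x_i'$, with $A(u) = I + c(u)\, u u^T$ and $c(u) = (-1 + \sqrt{1+|u|^2})/|u|^2$, is the real-analytic symmetric positive-definite square root of $I + u u^T$, so $A(0) = I$ is invertible and the pure-quadratic generators are transformed into $\tilde x_i \cdot \tilde x_j$. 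After these moves, $\mathcal{J}$ becomes equivalent to $\langle y_{j1} \rangle + \mathcal{I}_{p-1, k-1, 0}$ in disjoint variables.

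By Example~\ref{example:linear-space}, the sum rule (Fact~\ref{fact:sum-product-rule}(1)), and the induction hypothesis (applicable since $p - 1 \geq (k-1) + 2$), one obtains $\rlct_0(\mathcal{J}; 1) = (p-1, 1) + ((p-1)(k-1)/2, 1) - (0, 1) = ((p-1)(k+1)/2, 1)$. A Newton-polyhedron calculation (Fact~\ref{fact:monomial}) yields $\rlct_0(\langle x_{11}^2 \rangle; |x_{11}|^{pk-1}) = (pk/2, 1)$, and because $p \geq k + 2$ gives $pk/2 < (p-1)(k+1)/2$, the product rule (Fact~\ref{fact:sum-product-rule}(2)) returns $\rlct_0 = (pk/2, 1)$ at the origin of the chart. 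To pass this value to the chain rule (Fact~\ref{fact:chain-rule}), I would apply Fact~\ref{fact:local-rlct} to a compact neighborhood $\Omega$ of the chart's origin: every point of $\rho^{-1}(0) \cap \Omega$ then contributes $\rlct_y \geq \rlct_\Omega = (pk/2, 1)$, and combining this lower bound with the upper bound at the chart's origin (and symmetry across charts) yields $\rlct_0(\mathcal{I}_{p,k,0}; 1) = (pk/2, 1)$, closing the induction.

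The main obstacle is step (ii) of the coordinate change: one must exhibit a real-analytic matrix $A(u)$ with $A A^T = I + u u^T$ and $A(0)$ invertible, which is what allows the pure-quadratic generators to take exactly the inductive shape $\tilde x_i \cdot \tilde x_j$ and thereby produce the ideal $\mathcal{I}_{p-1,k-1,0}$ needed to invoke the induction hypothesis. The explicit formula above works because $|u|^2$ enters $\sqrt{1 + |u|^2}$ through an even power, making $c(u)$ (and hence $A(u)$) real-analytic at the origin; without such an analytic gauge-fixing substitution, the reduction to $\mathcal{I}_{p-1,k-1,0}$ fails and the recursion cannot close.
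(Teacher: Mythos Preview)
Your approach is essentially identical to the paper's: homogeneity for the first equality, blow-up at the origin, the same two coordinate changes (linearizing the mixed generators, then absorbing $I+u^{T}u$ via its analytic square root), and induction combined with the sum/product rules.

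Two small points of imprecision deserve comment. First, after your step~(i) the ``pure quadratic'' generators still carry $y_{i1}$-terms: writing $x_{j1}=y_{j1}-u\cdot x_j'$ one finds $x_{i1}x_{j1}+x_i'\cdot x_j'=(y_{i1}-u\cdot x_i')(y_{j1}-u\cdot x_j')+x_i'\cdot x_j'$, which only becomes $x_i'(I+u^{T}u)(x_j')^{T}$ after reducing modulo the first batch $\langle y_{j1}\rangle$. The paper performs this ideal-theoretic reduction explicitly \emph{before} introducing the square-root substitution; your conclusion that $\mathcal{J}$ is equivalent to $\langle y_{j1}\rangle+\mathcal{I}_{p-1,k-1,0}$ is correct, but the intermediate sentence is not literally true as stated.

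Second, your invocation of Fact~\ref{fact:local-rlct} to close the chain rule is circular: you assert $\rlct_{\Omega}=(pk/2,1)$, yet $\rlct_{\Omega}=\min_{y\in\Omega}\rlct_y$ is exactly what must be verified at \emph{every} point of the exceptional divisor, not just the chart origin. The clean way to fill this (which the paper also leaves implicit) is to observe that $x_{11}$ does not appear in $\mathcal{J}$, so at any $y=(0,y')\in\rho^{-1}(0)$ the product rule with disjoint variables applies; your substitutions together with the homogeneity of $\langle y_{j1}\rangle+\mathcal{I}_{p-1,k-1,0}$ give $\rlct_{y'}(\mathcal{J};1)\geq((p-1)(k+1)/2,1)$, whose first component strictly exceeds $pk/2$ since $p\geq k+2$, forcing $\rlct_y=(pk/2,1)$ everywhere on the exceptional divisor.
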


\begin{proof}
    Again, the first equality follows by Fact~\ref{fact:homogeneous}. For the second, we proceed by induction on $k$.  If $k=0$, then the claim follows from Lemma~\ref{lemma:model-dimension} and, although technically unnecessary, the claim also follows for $k=1$ by Lemma~\ref{lemma:k=1,r=0}.
  
  Now, for the induction step, fix $k > 0$.  The first transformation we use is a blow-up at the origin, see Section~\ref{subsection:blow-up}.
  By symmetry, we only need to consider one chart and we can write the transformation as
  \begin{align*}
    \lambda_{pk} &= \lambda_{pk}' \text{ and } \\
    \lambda_{ij} &= \lambda_{pk}'\lambda_{ij}' \text{ for }  (i,j)\not=(p,k),
  \end{align*}
  which has Jacobian determinant equal to $(\lambda_{pk}')^{pk-1}$.  Omitting the primes on
  the new variables and letting
  $\tilde\lambda_i=(\lambda_{i1},\dots,\lambda_{i(k-1)})$, the pullback of the ideal
  $\mathcal{I}_{p,k,0}$ is
  \[
    \langle\lambda_{pk}^2\rangle\cdot (\mathcal{J}_1+\mathcal{J}_2),
  \]
  where
  \begin{align*}
    \mathcal{J}_1 &= \langle \lambda_{jk} +\tilde\lambda_p\tilde\lambda_j^T
    \mid
   1\leq j \leq p-1\rangle, \\
    \mathcal{J}_2 &= \langle \lambda_{ik}\lambda_{jk}
    +\tilde\lambda_i\tilde\lambda_j^T \mid 1\le i<j\le p-1\rangle.
  \end{align*}
  It follows from the very definition of the RLCT that
  \[
    \rlct_0(\langle\lambda_{pk}^2\rangle; \lambda_{pk}^{pk-1}) = \frac{pk}{2},
  \]
  see also~\cite[Theorem 7.1]{lin:2014}.
  Using the form of the generators for $\mathcal{J}_1$, we infer
  \begin{align*}
    \mathcal{J}_1+\mathcal{J}_2&=\mathcal{J}_1+\langle\;
    (-\tilde\lambda_p\tilde\lambda_i^T)(-\tilde\lambda_p\tilde\lambda_j^T)+\tilde\lambda_i\tilde\lambda_j^T
    \mid 1\le i<j\le p-1\;\rangle\\
    &= \mathcal{J}_1 + \langle\;
    \tilde\lambda_i(\tilde\lambda_p^T\tilde\lambda_p + I)\tilde\lambda_j^T
    \mid 1\le i<j\le p-1\;\rangle,
  \end{align*}
  where $I$ denotes the identity matrix of dimension $p-1$.
  The first equality follows by subtracting the product of the $i$-th and the $j$-th generator of $\mathcal{J}_1$ from the corresponding generator of $\mathcal{J}_2$ while the second is simply matrix multiplication. Indeed, $(-\tilde{\lambda}_p \tilde{\lambda}_i^T)(-\tilde{\lambda}_p \tilde{\lambda}_j^T) = (\tilde{\lambda}_i\tilde{\lambda}_p^T)(\tilde{\lambda}_p \tilde{\lambda}_j^T) = \tilde{\lambda}_i(\tilde{\lambda}_p^T \tilde{\lambda}_p)\tilde{\lambda}_j^T$.
  
  Now, transform the variables $\lambda_{jk}$ for $1\le j\le p-1$ as
  \[
    \lambda_{jk}=\lambda_{jk}'-\tilde\lambda_p\tilde\lambda_j^T,
  \]
  leaving all other variables fixed.  This map has
  Jacobian determinant equal to $1$ and shows that the $\rlct_0$ of $\mathcal{J}_1+\mathcal{J}_2$
  is equal to the $\rlct_0$ of
  \begin{equation}\label{equ:step-ideal}
    \langle \lambda_{jk} \::\:     1\le j\le p-1\rangle + \langle\;
    \tilde\lambda_i(I+\tilde\lambda_p^T\tilde\lambda_p)\tilde\lambda_j^T
    \::\: 1\le i<j\le p-1\;\rangle.
  \end{equation}
  By Example~\ref{example:linear-space}
  \begin{equation*}
    \rlct_0(\langle \lambda_{jk} \::\:     1\le j\le p-1\rangle;1) = (p-1,1).
  \end{equation*}
  Since the matrix $I+\tilde\lambda_p^T\tilde\lambda_p$ is positive definite and thus has all of its eigenvalues real, positive and bounded away from $0$, we may change coordinates as
  \begin{equation*}
    \tilde\lambda_i =
    \tilde\lambda_i'(I+\tilde\lambda_p^T\tilde\lambda_p)^{-1/2}, \quad
    1\le i\le p-1,
  \end{equation*}
  where the positive definite matrix square root is given by
  \[
 I + \left( \frac{1}{\sqrt{1 + \tilde{\lambda}_p \tilde{\lambda}_p^T}} - 1 \right) \cdot \frac{\tilde{\lambda}_p^T \tilde{\lambda}_p}{\tilde{\lambda}_p \tilde{\lambda}_p^T}.
\]
Hence, the Jacobian determinant of this transformation is positive on its domain and can be ignored. 
  Dropping the primes, this gives 
  \begin{align*}
    &\rlct_0(
    \langle\;
    \tilde\lambda_i(I+\tilde\lambda_p\tilde\lambda_p^T)\tilde\lambda_j^T
    \::\: 1\le i<j\le p-1\;\rangle; 1) \\
    =&\rlct_0(
    \langle\;
    \tilde\lambda_i\tilde\lambda_j^T
    \::\: 1\le i<j\le p-1\;\rangle; 1)=\rlct_0(I_{p-1,k-1,0};1).
  \end{align*}
 Using the induction hypothesis and the sum rule (Fact~\ref{fact:sum-product-rule}(1)), we obtain that the $\rlct_0$ of the ideal in (\ref{equ:step-ideal}) is
  \begin{equation}\label{equation:ind-step}
       \left(p-1+\frac{(p-1)(k-1)}{2},1\right) = \left(\frac{(p-1)(k+1)}{2},1\right).
  \end{equation}
  If $p\geq k+2$, as we assume, then
  \begin{equation*}
    \frac{(p-1)(k+1)}{2}> \frac{pk}{2}.
  \end{equation*}
 By the product rule (Fact~\ref{fact:sum-product-rule}(2)), we conclude that
  \begin{equation*}
    \rlct_0(\mathcal I_{p,k,0}) = 
    \rlct_0(\langle\lambda_{pk}^2\rangle;
    \lambda_{pk}^{pk-1})=\left(\frac{pk}{2},1\right). 
    \qedhere
  \end{equation*}
\end{proof}

\begin{cor}\label{thm:r=0}
The learning coefficient and its order of the factor analysis model with $p$-dimensional observations and $k\leq p-2$ latent factors along the submodel with $r = 0$ latent factors satisfy
  \[
  \ell_{k0} = \ell_k(\Sigma_0)= 
      \frac{p(k+2)}{4}
\quad \text{and} \quad 
  m_{k0} = m_k(\Sigma_0) = 1,
  \]
  where $\Sigma_0$ is any diagonal matrix.
\end{cor}

\begin{proof}
    This follows immediately from Lemma~\ref{lemma:r=0} and Theorem~\ref{fact:reduction}.
\end{proof}

\subsection{A general upper bound}\label{subsection:upper-bound}

In the following lemma, we denote the first entry of the RLCT by $ \operatorname{rlct}_\Lambda(\mathcal{I}_{p,k}(\Sigma_0); 1)$. Note that the cases of Lemma~\ref{lemma:bound} and Corollary~\ref{thm:bound-alternate} with $d_r > p(p+1)/2$ are covered by Lemma~\ref{lemma:model-dimension}.

\begin{lemma}
  \label{lemma:bound}
  Let $\Sigma_0 \in \R^{p\times p}$ be a fixed covariance matrix that is chosen generically from the $r$-factor model for some $r\in\{0,\dots,k\}$.
   If $d_r \leq p(p+1)/2$, then
  \begin{equation*}
   \min_\Lambda \operatorname{rlct}_\Lambda(\mathcal{I}_{p,k}(\Sigma_0); 1) \le 
       \frac{ pk+r(p-k+1)}{2},
  \end{equation*}
  where the minimum ranges over all $\Lambda \in \R^{p \times k}$ with $\Sigma_k(\psi,\Lambda) = \Sigma_0$ for some $\psi \in  \R_{ > 0}^p$.
\end{lemma}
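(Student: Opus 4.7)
The plan is to upper bound the first entry of $\operatorname{rlct}_{\Lambda_*}(\mathcal{I}_{p,k}(\Sigma_0);1)$ at a carefully chosen base point $\Lambda_*$ by a sequence of analytic changes of variables, reducing the problem to the diagonal case treated in Section~\ref{subsection:diagonal}. By Fact~\ref{fact:LQ} I may take $\Lambda_*=\bigl[\begin{smallmatrix}L&0\\M&0\end{smallmatrix}\bigr]\in\mathcal{L}^{p\times k}_{r,+}$ with $L\in\R^{r\times r}$ lower triangular (positive diagonal) and $M\in\R^{(p-r)\times r}$, both generic since $\Sigma_0$ is generic in $\mathcal{M}_r$. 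After shifting $\Lambda=\Lambda_*+\bigl[\begin{smallmatrix}\alpha&0\\\gamma&\delta\end{smallmatrix}\bigr]$, the generators of $\mathcal{I}_{p,k}(\Sigma_0)$ at the origin split into three groups indexed by the position of $i<j$:
\begin{align*}
\text{(A)}\quad & L_i\alpha_j^T+\alpha_iL_j^T+\alpha_i\alpha_j^T, && 1\le i<j\le r,\\
\text{(B)}\quad & L_i\gamma_{j-r}^T+\alpha_iM_{j-r}^T+\alpha_i\gamma_{j-r}^T, && 1\le i\le r<j,\\
\text{(C)}\quad & M_{i-r}\gamma_{j-r}^T+\gamma_{i-r}M_{j-r}^T+\gamma_{i-r}\gamma_{j-r}^T+\delta_{i-r}\delta_{j-r}^T, && r<i<j.
\end{align*}

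The first step is to exploit invertibility of $A:=L+\alpha$ near the origin to introduce new coordinates $U_{ij}$ (for $1\le i\le r<j$) equal to the (B) generators; this analytically solves for $\gamma$ and has positive Jacobian determinant (a power of $\det A^{-1}$). By the Chain Rule (Fact~\ref{fact:chain-rule}) the pair $\rlct_0$ is unchanged; in the new coordinates (B) becomes $\langle U_{ij}\rangle$, (A) is unchanged, and (C) is an analytic function of $(\alpha,\delta,U)$. Because the $U$-variables are disjoint from $(\alpha,\delta)$ and reducing (C) modulo $\langle U\rangle$ produces an ideal in $(\alpha,\delta)$ only, the Sum Rule (Fact~\ref{fact:sum-product-rule}(1)) combined with Example~\ref{example:linear-space} yields
\[
\rlct_0(\mathcal{I}_{p,k}(\Sigma_0))=(r(p-r),1)+\rlct_0(\mathcal{J})-(0,1),
\]
where $\mathcal{J}$ is generated by (A) together with (C) restricted to $\{U=0\}$.

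For the second step, the linear map $\alpha\mapsto L\alpha^T+\alpha L^T$ from lower triangular $r\times r$ matrices to symmetric $r\times r$ matrices is an isomorphism (solvable row-by-row using positivity of the diagonal of $L$), so the (A) generators $t_{ij}$ serve as $\binom{r}{2}$ of the new $\alpha$-coordinates, with some complementary $s\in\R^r$ making the change of variables analytic near the origin. A second application of the Sum Rule produces $\rlct_0(\mathcal{J})=(\binom{r}{2},1)+\rlct_0(\mathcal{J}')-(0,1)$, where $\mathcal{J}'$ is the ideal in $(s,\delta)$ obtained from (C) after setting $U=0$ and $t=0$. The crucial observation is that on $\{t=0\}$ the symmetric matrix $AA^T-LL^T$ has zero off-diagonal and hence equals $\operatorname{diag}(s)$, so expanding $K:=L^T(AA^T)^{-1}L=I-L^{-1}\operatorname{diag}(s)L^{-T}+O(s^2)$ shows that every generator of $\mathcal{J}'$ has the form
\[
-\sum_m s_m\,n_{im}n_{jm}+\delta_{i-r}\delta_{j-r}^T+O(s^2), \qquad n_i:=M_{i-r}L^{-1},
\]
and therefore lies in $\langle s_1,\ldots,s_r\rangle+\mathcal{I}_{p-r,k-r,0}$, giving $\mathcal{J}'\subseteq\langle s\rangle+\mathcal{I}_{p-r,k-r,0}$. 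The standard monotonicity $\rlct_0(\mathcal{I})\le\rlct_0(\mathcal{J})$ whenever $\mathcal{I}\subseteq\mathcal{J}$ (immediate from the integral definition, as the integrand only shrinks when more generators are added), together with the Sum Rule on the disjoint variable sets $s$ and $\delta$, Example~\ref{example:linear-space}, and Theorem~\ref{thm:r=0} (which uniformly gives $\ell(\mathcal{I}_{p-r,k-r,0})\le(p-r)(k-r)/2$ under the hypothesis $d_r\le p(p+1)/2$), then yields $\ell(\mathcal{J}')\le r+\tfrac{(p-r)(k-r)}{2}$. Summing the three contributions produces exactly $\tfrac{pk+r(p-k+1)}{2}$, as required.

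The hardest step is expected to be the verification of the clean identity $AA^T=LL^T+\operatorname{diag}(s)$ on $\{t=0\}$ and the careful tracking of all higher-order corrections from the expansion of $K-I$ so that the containment $\mathcal{J}'\subseteq\langle s\rangle+\mathcal{I}_{p-r,k-r,0}$ genuinely holds; this rests on the fact that these corrections only contribute further elements of $\langle s\rangle$. The edge cases $p-r\le 1$ or $k=p$ require only minor modifications, since Theorem~\ref{thm:r=0} still delivers the bound $\ell(\mathcal{I}_{p-r,k-r,0})\le(p-r)(k-r)/2$ uniformly.
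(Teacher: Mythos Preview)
Your argument is correct and reaches the same numerical bound, but the path differs from the paper's. Both begin in LQ-coordinates; the paper then observes that every generator of $\mathcal{I}_{p,k}(\Sigma_0)$ is a sum $x+y$ with $x$ depending only on $(\Lambda_{11},\Lambda_{21})$ and $y$ only on $\Lambda_{22}$, applies the elementary inequality $(x+y)^2\le 2x^2+2y^2$ to the full sum of squares, and obtains
\[
\operatorname{rlct}_\Lambda(\mathcal{I}_{p,k}(\Sigma_0);1)\le\operatorname{rlct}_{\Lambda_{11}}(\mathcal{I}_{p,r}(\Sigma_0);1)+\operatorname{rlct}_0(\mathcal{I}_{p-r,k-r,0};1)
\]
in one stroke; the first summand equals $pr-\binom{r}{2}$ by smoothness at a generic point of $\mathcal{M}_r$ (Lemma~\ref{lemma:model-dimension}), and the second is handled by Lemmas~\ref{lemma:r=0}--\ref{lemma:k=p, r=0}. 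You instead perform two explicit coordinate changes to peel off the (B) and (A) blocks as linear ideals---in effect re-deriving the smooth-point RLCT of $\mathcal{I}_{p,r}(\Sigma_0)$ by hand---and then finish with the ideal containment $\mathcal{J}'\subseteq\langle s\rangle+\mathcal{I}_{p-r,k-r,0}$ plus monotonicity of the threshold. The paper's route is considerably shorter; yours is much closer to the explicit manipulations of Remark~\ref{rmk:nicer-eqs}, which the paper develops only later when it pursues \emph{equalities} (e.g., the case $r=1$). So your approach invests more work upfront but sets up machinery relevant for sharper results. One small citation issue: the bound $\operatorname{rlct}_0(\mathcal{I}_{p-r,k-r,0};1)\le(p-r)(k-r)/2$ should be attributed to Lemmas~\ref{lemma:r=0}--\ref{lemma:k=p, r=0} rather than Theorem~\ref{thm:r=0}, since the latter records learning coefficients, not the RLCT of the reduced fiber ideal.
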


\begin{proof}
  By the considerations in Section~\ref{subsection:LQ-decomp}, we may assume that $\Sigma_0 = \diag(\psi) + LL^T$, where $\psi \in  \R_{ > 0}^p$ and $L \in \R^{p\times r}$ of the form
  \[
  L = 
    \begin{pmatrix}
      L_{11} \\
      L_{21}
    \end{pmatrix}
  \]
  with $L_{11}$ a lower triangular $(r\times r)$-matrix of full rank. Moreover, by Fact~\ref{fact:LQ}, we can replace in the definition of $\mathcal{I}_{p,k}(\Sigma_0)$ the $(p\times k)$-matrix of indeterminates $\Lambda$ by a $(p \times k)$-matrix of indeterminates of the form
  \[
  \Lambda =
  \begin{pmatrix}
      \Lambda_{11} & 0\\
      \Lambda_{21} & \Lambda_{22}
    \end{pmatrix},
  \]
  where $\Lambda_{11}$ is a lower triangular $(r\times r)$-matrix on whose diagonal entries we impose that they be positive. We call this representation \textit{LQ-coordinates}, but note that lower triangularity is only imposed on the $\Lambda_{11}$ block and need not be imposed on the $\Lambda_{22}$ block. 
  In these coordinates, the generators of $\mathcal{I}_{m,k}(\Sigma_0)$ are the off-diagonal
  entries of the matrix
  \begin{multline}
    \label{eq:split}
    \begin{pmatrix}
      \Lambda_{11} & 0\\
      \Lambda_{21} & \Lambda_{22}
    \end{pmatrix}
    \begin{pmatrix}
      \Lambda_{11} & 0\\
      \Lambda_{21} & \Lambda_{22}
    \end{pmatrix}^T
    -\begin{pmatrix}
      L_{11}\\
      L_{21}
    \end{pmatrix}
    \begin{pmatrix}
      L_{11}\\
      L_{21}
    \end{pmatrix}^T\\
    =
    \left[\begin{pmatrix}
      \Lambda_{11}\\
      \Lambda_{21}
    \end{pmatrix}
    \begin{pmatrix}
      \Lambda_{11}\\
      \Lambda_{21}
    \end{pmatrix}^T
    -\begin{pmatrix}
      L_{11}\\
      L_{21}
    \end{pmatrix}
    \begin{pmatrix}
      L_{11}\\
      L_{21}
    \end{pmatrix}^T
    \right]+
    \begin{pmatrix}
      0 & 0\\
      0 & \Lambda_{22}\Lambda_{22}^T
    \end{pmatrix}.
  \end{multline}
  In particular, each generator is the sum of a term formed from the entries
  of $(\Lambda_{11},\Lambda_{21})$ and a second term formed from the
  entries of $\Lambda_{22}$.

  We have that the off-diagonal entries
  of the first matrix on the right-hand side of~(\ref{eq:split}) in
  square brackets generate the ideal $\mathcal{I}_{p,r}(\Sigma_0)$ in LQ-coordinates, and the off-diagonal entries of the second matrix
  generate $\mathcal{I}_{p-r,k-r}(0)$.  Applying the inequality
  \begin{equation*}
    \label{eq:split-square}
    (x+y)^2\le 2x^2+2y^2
  \end{equation*}
  to each generator in turn, we see that the sum of squares of the generators of $\mathcal{I}_{p,k}(\Sigma_0)$ is bounded above by two times the sum of
  the two sums of squares that are associated with
  $\mathcal{I}_{p,r}(\Sigma_0)$ and $\mathcal{I}_{p-r,k-r}(0)$ respectively.
  Using Fact~\ref{fact:homogeneous}, we deduce that
  \begin{equation*}
    \operatorname{rlct}_\Lambda(\mathcal{I}_{p,k}(\Sigma_0); 1) \le     \operatorname{rlct}_{\Lambda_{11}}(\mathcal{I}_{p,r}(\Sigma_0);
    1) +\operatorname{rlct}_0(\mathcal{I}_{p-r,k-r}(0);1)
  \end{equation*}
  for every $\Lambda \in \R^{p\times k}$ in LQ-coordinates.
  The claimed upper bound now follows, because
  \begin{equation*}
    \operatorname{rlct}_{\Lambda_{11}}(\mathcal{I}_{p,r}(\Sigma_0);1) = 
       pr-\frac{r(r-1)}{2}
  \end{equation*}
  by Lemma~\ref{lemma:model-dimension}
  and
  \begin{equation*}
    \operatorname{rlct}_0(\mathcal{I}_{p-r,k-r}(0);1) \leq \frac{(p-r)(k-r)}{2}
  \end{equation*}
  by Lemma~\ref{lemma:r=0}.
\end{proof}


\begin{cor}
\label{thm:bound-alternate}
     Let $\ell_k(\Sigma_0)$ be the learning coefficient of the factor analysis model with $k$ latent factors at a fixed generic covariance matrix $\Sigma_0$ in the $r$-factor model, where $r \in \{0,\ldots,k\}$. If $d_r = p(r+1) - r(r-1)/2 \leq p(p+1)/2$, then 
  \[
  \ell_k(\Sigma_0) \leq \frac{p(k+2)+r(p-k+1)}{4}.
  \]
\end{cor}

Note that we already can see at this point that the inequality just given is an equality for $r=0$ (Corollary~\ref{thm:r=0}) and
$r=k$ (Lemma~\ref{lemma:model-dimension}). The values for $1\le r\le k-1$ are equally spaced between
those extremes. The case $d_r > p(p+1)/2$ is covered by Lemma~\ref{lemma:model-dimension}. The main result of this paper is that this bound gives the correct value for the learning coefficient, see Section~\ref{subsection:formula-learning-coefficients}.

\begin{proof}
    This follows immediately from Lemma~\ref{lemma:bound} and Theorem~\ref{fact:reduction}.
\end{proof}

\begin{remark} \label{rmk:nicer-eqs}
We continue the line of thought from the proof of Corollary \ref{thm:bound-alternate} by applying one further coordinate transformation and manipulations on the generators of the resulting ideals. This will be used in Section~\ref{subsection:one-factor} below to handle the case $r = 1$. Note that
\begin{multline}
  \label{eq:gens-in-matrix}
  \left[\begin{pmatrix}
      \Lambda_{11}\\
      \Lambda_{21}
    \end{pmatrix}
    \begin{pmatrix}
      \Lambda_{11}\\
      \Lambda_{21}
    \end{pmatrix}^T
    -\begin{pmatrix}
      L_{11}\\
      L_{21}
    \end{pmatrix}
    \begin{pmatrix}
      L_{11}\\
      L_{21}
    \end{pmatrix}^T
  \right]+
  \begin{pmatrix}
    0 & 0\\
    0 & \Lambda_{22}\Lambda_{22}^T
  \end{pmatrix}\\
  =
  \begin{pmatrix}
    \Lambda_{11}\Lambda_{11}^T-L_{11}L_{11}^T &
    \Lambda_{11}\Lambda_{21}^T-L_{11}L_{21}^T \\ 
    \Lambda_{21}\Lambda_{11}^T-L_{21}L_{11}^T &
    \Lambda_{22}\Lambda_{22}^T+\Lambda_{21}\Lambda_{21}^T-L_{21}L_{21}^T
  \end{pmatrix}.
\end{multline}
Since $\Lambda_{11}$ is invertible in LQ-coordinates, we
may apply the real analytic isomorphism using
\begin{equation*}
  \Lambda_{21}' = \Lambda_{21}\Lambda_{11}^T-L_{21}L_{11}^T
\end{equation*}
leaving the variables in $\Lambda_{11}$ and $\Lambda_{22}$ fixed, whose inverse on the $\Lambda_{21}$-coordinates is
\begin{equation*}
    \Lambda_{21} =
    \left(\Lambda_{21}'+L_{21}L_{11}^T\right)\Lambda_{11}^{-T}.
\end{equation*}
In the new coordinates, after dropping primes, our ideal is 
generated by the off-diagonal entries of
\begin{equation*}
\begin{footnotesize}
  \begin{pmatrix}
    \Lambda_{11}\Lambda_{11}^T-L_{11}L_{11}^T &
    \Lambda_{21}^T \\ 
    \Lambda_{21} &
    \Lambda_{22}\Lambda_{22}^T+\left(\Lambda_{21}+L_{21}L_{11}^T\right)\Lambda_{11}^{-T}\Lambda_{11}^{-1}\left(\Lambda_{21}^T+L_{11}L_{21}^T\right)-L_{21}L_{21}^T
  \end{pmatrix}.
  \end{footnotesize}
\end{equation*}
We may use the generators in $\Lambda_{21}$ to remove terms from the
generators in the lower right block of the matrix. Doing so, we obtain that the
ideal is generated by the off-diagonal entries of
\begin{equation}
  \begin{pmatrix}
    \Lambda_{11}\Lambda_{11}^T-L_{11}L_{11}^T &
    \Lambda_{21}^T \\ 
    \Lambda_{21} &
    \Lambda_{22}\Lambda_{22}^T+L_{21}L_{11}^T\Lambda_{11}^{-T}\Lambda_{11}^{-1}L_{11}L_{21}^T-L_{21}L_{21}^T
  \end{pmatrix}.
\end{equation}
Since $\Lambda_{21}$ has $(p-r)r$ entries, it follows by Fact~\ref{fact:sum-product-rule}(1) that the RLCT of
the ideal is $((p-r)r,0)$ plus the RLCT of the ideal generated by the
off-diagonal entries of the two matrices
\begin{equation}
\label{eq:matrix1}
  \Lambda_{11}\Lambda_{11}^T-L_{11}L_{11}^T
\end{equation}
and
\begin{equation}
  \label{eq:matrix2}
  \Lambda_{22}\Lambda_{22}^T+L_{21}L_{11}^T\left[ \left( \Lambda_{11}\Lambda_{11}^T \right)^{-1}-\left(L_{11}L_{11}^T\right)^{-1}\right]L_{11}L_{21}^T.
\end{equation}
The former matrix is of size $r\times r$ and the latter is of size
$(p-r)\times (p-r)$.
\end{remark}

\subsection{The general formula for the learning coefficient}\label{subsection:formula-learning-coefficients}
We now prove our main result, a formula showing that, for arbitrary $p\geq k \geq r \geq 0$, the upper bound from Corollary~\ref{thm:bound-alternate} is the actual value of the learning coefficient of a factor analysis model and that its order is $1$. Along the way, we compute a full log resolution of singularities for the real analytic space defined by the fiber ideal on the parameter space of the factor analysis model.

\begin{theorem}
\label{thm:main-formula}
     The learning coefficient and its order for the factor analysis model with $k$ latent factors at a fixed generic covariance matrix $\Sigma_0$ in the $r$-factor model with $r \in \{0,\ldots,k\}$ satisfy 
  \[
  \ell_k(\Sigma_0) = \ell_{kr} = \frac{p(k+2)+r(p-k+1)}{4} \quad
  \text{ and } \quad
    m_k(\Sigma_0) = m_{kr} = 1.
  \]

\end{theorem}

\begin{proof}
Let $\Sigma_0 = LL^T + D$, where $L \in \R^{p\times k}$ is of rank $r$ and $D = \diag(d_1,\ldots,d_p)$ with all $d_i$ positive. Suppose that $r$ is minimal such that such a decomposition is possible; this amounts to a first genericity assumption on $\Sigma_0$. We can use an orthonormal $p\times p$ matrix $Q$ to diagonalize $LL^T$, so that
\[
E \coloneqq QLL^TQ^T = \diag(\varepsilon_1,\ldots,\varepsilon_r,0,\ldots,0)
\]
with $\varepsilon_i \neq 0$.
The (full) fiber ideal $\overline{\mathcal{I}}_{p,k}(\Sigma_0)$ at $\Sigma_0$ of the $p$-factor model with $k$ latent factors is generated by the entries of $\Lambda\Lambda^T + \diag(\psi_1,\ldots,\psi_p) - \Sigma_0$. Multiplying this matrix with $Q$ from the left and with $Q^T$ from the right changes the generators of $\overline{\mathcal{I}}_{p,k}(\Sigma_0)$ but not the ideal, as $Q$ is invertible. Changing variables via $\Lambda \mapsto Q^T\Lambda$, which is a real analytic isomorphism as $Q$ is invertible, we get
\begin{equation}\label{equ:first-Q-form}
   \Lambda\Lambda^T + Q\diag(\psi_1- d_1, \ldots,\psi_p-d_p)Q^T - E. 
\end{equation}
Note that the Jacobian determinant is non-vanishing by the very fact that this is an isomorphism.
We apply two further changes of coordinates. The first one, $\psi_i \mapsto \psi_i + d_i$, is again an isomorphism, while the second one, $\psi_i \mapsto \psi_i^2$, is an isomorphism away from the set of $\psi$ with $\psi_1 \cdots \psi_p = 0$. It gives a Jacobian determinant of $2^p \psi_1\cdots \psi_p$ which we have to take into account when applying the chain rule (Fact~\ref{fact:chain-rule}). We write $A(\psi) = Q\diag(\psi_1^2, \ldots,\psi_p^2)Q^T$. Note that $A(\psi)_{ij} = \sum_{s = 1}^p q_{is}q_{js}\psi_s^2$ only depends on the $i$-th and $j$-th row of $Q = (q_{is})$. Let us assume that $\Lambda$ is in $LQ$-coordinates, see Section~\ref{subsection:LQ-decomp}. Then, taking into account the transformations, equation (\ref{equ:first-Q-form}) simplifies to
\[
    \begin{bmatrix}
        \Lambda_{11}\Lambda_{11}^T + A(\psi)_{11} - \diag(\varepsilon_1,\ldots,\varepsilon_r) & \Lambda_{11}\Lambda_{21}^T + A(\psi)_{21}^T\\
        \Lambda_{21}\Lambda_{11}^T + A(\psi)_{21} & \Lambda_{21}\Lambda_{21}^T + \Lambda_{22}\Lambda_{22}^T + A(\psi)_{22}
    \end{bmatrix},
\]
where $A(\psi)_{11}$ is the $r\times r$ upper left submatrix of $A(\psi)$, $A(\psi)_{22}$ is the $(p-r) \times (p-r)$ lower right submatrix of $A(\psi)$, and $A(\psi)_{21}$ is the $(p-r)\times r$ lower left submatrix of $A(\psi)$.
In $LQ$-coordinates the submatrix $\Lambda_{11}$ is an invertible lower triangular matrix with positive diagonal entries. So, the map $\Lambda_{21}\mapsto \Lambda_{21}\Lambda_{11}^T$ followed by $\Lambda_{21} \mapsto \Lambda_{21} + A(\psi)_{21}$ is a real analytic isomorphism that transforms the matrix of generators to
\begin{footnotesize}
\[ 
    \begin{bmatrix}   
\Lambda_{11}\Lambda_{11}^T + A(\psi)_{11} - \diag(\varepsilon_1,\ldots,\varepsilon_r) & \Lambda_{21}^T \\
        \Lambda_{21} & (\Lambda_{21} - A(\psi)_{21})\Lambda_{11}^{-T}\Lambda_{11}^{-1}(\Lambda_{21} - A(\psi)_{21})^T + \Lambda_{22}\Lambda_{22}^T + A(\psi)_{22}
\end{bmatrix}. 
\]
\end{footnotesize}
Using the entries of $\Lambda_{21}$, we can eliminate the summands involving these entries in the right lower block which again amounts to a change of generators for the ideal. This leads us to 
\[
\begin{bmatrix}
     \Lambda_{11}\Lambda_{11}^T + A(\psi)_{11} - \diag(\varepsilon_1,\ldots,\varepsilon_r) & \Lambda_{21}^T \\
        \Lambda_{21} & A(\psi)_{21}\Lambda_{11}^{-T}\Lambda_{11}^{-1} A(\psi)_{21}^T + \Lambda_{22}\Lambda_{22}^T + A(\psi)_{22}
\end{bmatrix}.
\]
Using the sum rule (Fact~\ref{fact:sum-product-rule}(1)), we disregard $\Lambda_{21}$ by adding $(p-r)r$ to the RLCT in the end. Using Fact~\ref{fact:chain-rule}, we now blow up along the linear subspace defined by the vanishing of the coordinates in $\Lambda_{22}$ and the $\psi$. Call $\mathcal{I}_1$ and $\mathcal{I}_2$ the ideals generated by the entries of $ \Lambda_{11}\Lambda_{11}^T + A(\psi)_{11} - \diag(\varepsilon_1,\ldots,\varepsilon_r)$ and $A(\psi)_{21}\Lambda_{11}^{-T}\Lambda_{11}^{-1} A(\psi)_{21}^T + \Lambda_{22}\Lambda_{22}^T + A(\psi)_{22}$, respectively. By symmetry of the generators, we only need to consider the charts corresponding to $\psi_p$ and $\lambda_{pk}$. We write the entry of $A(\psi)_{21}\Lambda_{11}^{-T}\Lambda_{11}^{-1} A(\psi)_{21}^T$ at position $ij$ as $f_{ij}$, which is a homogeneous polynomial of degree $4$ in the $\psi$-variables and is independent of the $\Lambda_{22}$ variables. Moreover, the diagonal entries of $\Lambda_{11}^{-T}\Lambda_{11}^{-1}$ are rational functions that are sums of products of squares, and hence, the same is true for the $f_{ii}$.

We start with the chart corresponding to $\psi_p$. Taking into account the Jacobian term from before, the Jacobian determinant is $2^p\psi_1\cdots \psi_{p-1}\cdot \psi_p^{(p-r)(k-r) + 2p -1}$. The diagonal generators of $\mathcal{I}_2$ are of the form
\[
f_{ii} + \sum_{l = r+1}^k \lambda_{il}^2 + \sum_{s = 1}^p q_{is}^2 \psi_s^2
\]
for $i \in \{r+1,\ldots,p\}$.
So their pullback is
\[
\psi_p^2 \cdot \left(g_{ii} +  \sum_{l = r+1}^k \lambda_{il}^2 + \sum_{s = 1}^{p-1} q_{is}^2 \psi_s^2 + q_{ip}^2 \right),
\]
where $\psi_p^2g_{ii}$ is the pullback of $f_{ii}$. Note that $g_{ii}$ is still a sum of squares, because $f_{ii}$ was of degree $4$ in the $\psi$-variables.
If we choose $\Sigma_0$, and hence $L$ generically, we can assume that not all of the $q_{ip}$ for $i \in \{r+1, \ldots, p\}$ are equal to $0$ because $Q$ is used to diagonalize $LL^T$. So, for some $i$ the expression $\left(g_{ii} + \sum_{l = r+1}^k \lambda_{il}^2 + \sum_{s = 1}^{p-1} q_{is}^2 \psi_s^2 + q_{ip}^2 \right)$ is strictly positive over the real numbers, and hence, is an analytic unit. Therefore, the pullback of $\mathcal{I}_2$ is $\langle \psi_p^2\rangle$.

We use this to eliminate the summands of the generators in the pullback of $\mathcal{I}_1$ having $\psi$-terms, which makes this ideal free of $\psi$-coordinates. This leaves us with the entries of $\Lambda_{11}\Lambda_{11}^T - \diag(\varepsilon_1,\ldots,\varepsilon_r)$ which, after applying the analytic isomorphism that scales the $l$-th row of $\Lambda_{11}$ by $\sqrt{\varepsilon_r}>0$ and multiplying the resulting matrix by $\diag(1/\sqrt{\varepsilon_1},\ldots,1/\sqrt{\varepsilon_r})$ from both sides, which amounts to a change of generators but keeps the ideal unchanged, becomes $\Lambda_{11}\Lambda_{11}^T - I_r$. This defines a real analytic manifold of codimension $r(r+1)/2$, and hence, its RLCT is 
\[
\left( \frac{r(r+1)}{2}, 1 \right).
\]
We compute the RLCT of the pullback of $\mathcal{I}_2$ as
\[
\rlct\left(\psi_p^2; \psi_p^{(p-r)(k-r) + 2p -1}\right) = \left( \frac{(p-r)(k-r) + 2p}{2}, 1 \right).
\]
Taking into account the coordinates from $\Lambda_{21}$, the first entry of the RLCT of the fiber ideal on this chart is
\begin{align*}
    & \frac{2(p-r)r + (p-r)(k-r) + 2p + r(r+1)}{2} = \frac{p(k+2) + r(p-k+1)}{2}
\end{align*}
with order $1$. By Theorem~\ref{fact:reduction}, these are the conjectured numbers.

We now do a similar argument on the chart corresponding to $\lambda_{pk}$. The pullback of $\mathcal{I}_2$ contains the generators
\[ \lambda_{pk}^2 \cdot \left(h_{ii} + 1 + \sum_{l = r+1}^{k-1} \lambda_{pl}^2 + \sum_{s = 1}^p q_{ps}^2 \psi_s^2 \right),\]
where $\lambda_{pk}^2h_{ii}$ is again the pullback of $f_{ii}$ and
in which the second factor is an analytic unit by the same argument as before. So, this pullback is $\langle \lambda_{pk}^2 \rangle$ and we have a Jacobian determinant of $2^p\psi_1 \cdots \psi_p \lambda_{pk}^{(p-r)(k-r) +2p - 1}$. The summands containing $\psi$-coordinates in the generators of $\mathcal{I}_1$ are eliminated using $\lambda_{pk}^2$. So, again, we can compute the RLCT of the pullback of $\mathcal{I}_2$ as
\[
\rlct\left(\lambda_{pk}^2; \lambda_{pk}^{(p-r)(k-r) + 2p -1}\right) = \left( \frac{(p-r)(k-r) + 2p}{2}, 1 \right).
\]
The remainder of the argument is exactly the same as for the $\psi_p$-chart.
\end{proof}

\section{Singularities of distinguished factor analysis models}
\label{sec:distinct-singularities}

In this final section, we show that the genericity assumption on $\Sigma_0$ is necessary in general. More precisely, the model $\mathcal M_1$ with $k = 1$ decomposes into exactly three strata with distinct learning coefficients. Furthermore, we analyze the nature of genericity for one-factor covariance matrices by proving that the RLCT takes the expected value for $\Sigma_0 \in \mathcal{M}_1$ with all off-diagonal entries being non-zero and arbitrary $1 \leq k \leq p$.

\subsection{Distinct singularities of one-factor models}\label{subsection:distinct-one-factor-singularities}

We now consider models with $k=1$ factors, in which case
$\Lambda=(\lambda_1, \ldots, \lambda_p)^T$ varies over vectors in
$\mathbb{R}^p$.  
The singularities of the one-factor parametrization
$\Sigma_1(\psi, \Lambda)$ are vectors $\Lambda_0$ with no more than two non-zero entries, see \cite{anderson1956}. 
We distinguish three cases for $\Lambda_0$ leading to distinct values of the learning coefficients at the covariance matrices associated to $\Lambda_0$ under $\Sigma_1$:
\begin{enumerate}
\item[(a)] $\Lambda_0$ has more than two non-zero entries;
\item[(b)] $\Lambda_0$ has exactly two non-zero entries;
\item[(c)] $\Lambda_0$ has at most one non-zero entry.
\end{enumerate}
Case (a) is the generic case (along the $1$-factor model) covered by Lemma~\ref{lemma:model-dimension} where the learning coefficient equals $p$.
In case (c), a covariance matrix $\Sigma_0$ associated with
$\Lambda_0$ is diagonal, which corresponds to the rank $0$ case we discussed in Corollary~\ref{prop:k=1,r=0}. The corresponding learning coefficient is $3p/4$.
In case (b), $\Sigma_0$ has precisely two non-zero off-diagonal entries. This is a singularity
that is not covered by the rank $0$ case and, as we will show, has different asymptotic
behavior:

\begin{prop}
  \label{prop:k1-two-nonzero}
  If $\Sigma_0$ is a positive definite matrix with precisely two
  non-zero off-diagonal entries then 
  \[
  \ell_1(\Sigma_0) = \frac{2p-1}{2} \quad\text{and}\quad
  m_1(\Sigma_0) = 1.
  \]
\end{prop}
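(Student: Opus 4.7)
The plan is to reduce to a purely local RLCT computation at the finitely many points of the zero-locus of the reduced fiber ideal, and then show that after an analytic change of coordinates the ideal becomes a linear one whose RLCT is immediately given by Example~\ref{example:linear-space}. The main obstacle is identifying the fiber and then verifying that, although the generating set of the fiber ideal contains many quadratic ``pair products'', all of them reduce modulo units and the linear generators to a transparent form.

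First, by permuting indices and applying Lemma~\ref{lem:torus}, I would assume without loss of generality that the only non-zero off-diagonal entries of $\Sigma_0$ are $\sigma_{12} = \sigma_{21} = 1$. Under this normalization, the reduced fiber ideal reads
\[
\mathcal{I}_{p,1}(\Sigma_0) = \langle\, \lambda_1\lambda_2 - 1,\;\; \lambda_1\lambda_j,\; \lambda_2\lambda_j \ (3 \le j \le p),\;\; \lambda_i\lambda_j \ (3\le i<j\le p)\,\rangle.
\]
By Fact~\ref{fact:reduction}, it suffices to compute $\min_\Lambda \rlct_\Lambda(\mathcal{I}_{p,1}(\Sigma_0);1)$, where the minimum ranges over $\Lambda$ with $\Sigma_0 = \psi + \Lambda\Lambda^T$ for some $\psi \in \R_+^p$; such $\Lambda$ must vanish on all coordinates from index $3$ onward and satisfy $\lambda_1\lambda_2 = 1$, so the fiber consists of the two isolated points $\pm\Lambda_0$ with $\Lambda_0 = (1,1,0,\ldots,0)$.

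Since $\Lambda \mapsto -\Lambda$ is a linear automorphism that stabilizes $\mathcal{I}_{p,1}(\Sigma_0)$, by Fact~\ref{fact:chain-rule} the RLCTs at the two points coincide, so I would only analyze the point $\Lambda_0$. I translate to the origin via the analytic isomorphism $\lambda_1 = 1 + x_1$, $\lambda_2 = 1 + x_2$, $\lambda_j = x_j$ for $j \ge 3$, whose Jacobian determinant is $1$. The generators become $x_1 + x_2 + x_1 x_2$, $x_j(1+x_1)$ and $x_j(1+x_2)$ for $j \ge 3$, and $x_i x_j$ for $3 \le i < j \le p$. On a sufficiently small compact neighborhood of the origin the factors $1+x_1, 1+x_2$ are analytic units, so $x_j$ itself lies in the ideal for each $j \ge 3$; the remaining pure quadratic generators $x_i x_j$ already lie in $\langle x_3,\ldots,x_p\rangle$. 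Therefore, as an ideal in $\mathcal{A}(\Omega_0)$ for a small compact neighborhood $\Omega_0$ of the origin, the transformed ideal equals
\[
\langle\, x_1 + x_2 + x_1 x_2,\; x_3,\;\ldots,\; x_p\,\rangle.
\]

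A final real analytic isomorphism $y_1 = x_1 + x_2 + x_1 x_2$, $y_j = x_j$ for $j \ge 2$ (with Jacobian determinant $1 + x_2$, which is positive near the origin and can be absorbed into the amplitude) turns this into the linear ideal $\langle y_1, y_3, \ldots, y_p\rangle$ of codimension $p-1$. Example~\ref{example:linear-space} gives
\[
\rlct_{\Lambda_0}(\mathcal{I}_{p,1}(\Sigma_0);1) = (p-1,\, 1),
\]
and combining with Fact~\ref{fact:reduction} yields
\[
\bigl(2\ell_1(\Sigma_0),\, m_1(\Sigma_0)\bigr) = (p-1,1) + (p,0) = (2p-1,\, 1),
\]
which is the claimed equality $\ell_1(\Sigma_0) = (2p-1)/2$ and $m_1(\Sigma_0) = 1$.
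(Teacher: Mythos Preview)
Your local computation is correct, but there is a genuine gap: the fiber is \emph{not} the pair of isolated points $\pm(1,1,0,\ldots,0)$. The conditions $\lambda_1\lambda_2 = 1$ and $\lambda_j = 0$ for $j\ge 3$ cut out a hyperbola, and the positivity constraint $\psi_i = \sigma_{ii} - \lambda_i^2 > 0$ only restricts to an open arc of it, namely $\{(t,1/t,0,\ldots,0) : t \text{ in some interval}\}$. Nothing in your normalization via Lemma~\ref{lem:torus} forces $\lambda_1=\lambda_2$; indeed there is no reason for $(1,1,0,\ldots,0)$ even to lie in the fiber without an extra word about the diagonal entries. So as written you have only computed $\rlct_{\Lambda_0}$ at one special point that may not be relevant, and you have not taken the minimum over the actual fiber.

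The fix is easy: your local argument works verbatim at \emph{any} fiber point $(t,1/t,0,\ldots,0)$ (translate by that point instead of by $(1,1)$; the units become $t+x_1$ and $1/t+x_2$), giving $\rlct = (p-1,1)$ everywhere along the fiber, and hence the minimum is $(p-1,1)$ as you claim. Alternatively, the paper's proof avoids the issue by first simplifying the ideal globally: from $\lambda_2\cdot(\lambda_1\lambda_i) - \lambda_i\cdot(\lambda_1\lambda_2 - \sigma_{12}) = \sigma_{12}\lambda_i$ one sees $\lambda_i \in \mathcal{I}$ for $i\ge 3$, so $\mathcal{I} = \langle \lambda_1\lambda_2 - \sigma_{12}, \lambda_3,\ldots,\lambda_p\rangle$, and then a single coordinate change valid on $\{\lambda_2\neq 0\}$ (which contains the whole fiber) linearizes it. Either route closes the gap; just don't claim the fiber is finite.
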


\begin{proof}
Suppose without loss of generality that $\sigma_{12} = \sigma_{21}$ are
the two non-zero off-diagonal entries of $\Sigma_0$.  The partial fiber ideal at $\Sigma_0$
is
\begin{equation*}
   \mathcal{I} \coloneqq \mathcal{I}_{p,1}(\Sigma_0)=\left\langle
    \lambda_1\lambda_2-\sigma_{12},\:
    \lambda_1\lambda_i,\: \lambda_j\lambda_l \mid 3 \le i \le p,  2\le j,l\le
    p,\, j\not=l\right\rangle.
\end{equation*}
The ideal contains
\begin{equation*}
  \label{eq:k1-two-change-generator}
    \lambda_2\cdot(\lambda_1\lambda_i) 
     - \lambda_i\cdot(\lambda_1\lambda_2-\sigma_{12})=
   \sigma_{12}\cdot \lambda_i
\end{equation*}
for all $i\ge 3$.  It follows that
\begin{equation*}
  \mathcal{I}=\left\langle
    \lambda_1\lambda_2-\sigma_{12},\:
    \lambda_i \mid 3\le  i\le
    p\right\rangle.
\end{equation*}

In light of Fact~\ref{fact:chain-rule}, we apply the map
\begin{equation*}
  \label{eq:hyperbola-change-coords}
  \lambda_1 = (\lambda_1'+\sigma_{12})/\lambda_2', \quad \lambda_i = \lambda_i' \text{ for } i \geq 2.
\end{equation*}
Dropping the primes, this map has Jacobian determinant $1/|\lambda_2|$. As $\sigma_{12} \neq 0$, the ideal $\mathcal{I}$ does not vanish at any point of $ U = \{\lambda_2 = 0\}$ and hence we can compute the $\rlct$ away from $U$. The Jacobian determinant can therefore be ignored.  The pullback of the 
ideal $\mathcal{I}$ under this map is
\begin{equation*}
  \left\langle
    \lambda_1,\:
    \lambda_i \mid 3\le  i\le
    p\right\rangle.
\end{equation*}
By Example~\ref{example:linear-space}, we infer that $\min_\Lambda \rlct_\Lambda(\mathcal{I};1) = (p-1,1)$, where the minimum ranges over all $\Lambda \in \R^{p\times 1}$ with $\Sigma_0 = \diag(\psi) + \Lambda\Lambda^T$ for some $\psi \in  \R_{ > 0}^p$. As usual, Theorem~\ref{fact:reduction} yields the learning coefficient as conjectured.
\end{proof}

\subsection{Genericity for one-factor covariance matrices}\label{subsection:one-factor} In the previous section, we studied the distinct singularities of the model $\mathcal{M}_1$ with $k = 1$ factors. We now turn to the case where $k$ is arbitrary. We determine the exact polynomial relations defining a Zariski open subset $U_{k1}$ of covariance matrices $\Sigma_0$ in the one-factor model $\mathcal{M}_1$ such that the learning coefficients $\ell_k(\Sigma_0)$ and their orders $m_k(\Sigma_0)$ do not depend on the choice of $\Sigma_0 \in U_{k1}$. Indeed, $U_{k1}$ is determined as the set of all $\Sigma_0 \in \mathcal{M}_1$ all of whose off-diagonal entries are non-zero. Recall the relevance of such a genericity assumption in regard to the sBIC, see Section~\ref{subsection:sbic}.

\begin{lemma}\label{lemma:k=2,r=1}
    Let $p> 1$, and let $\Sigma_0 \in \R^{p\times p}$ in the $1$-factor model with $p$-dimensional observations be chosen generically (in the sense that all of its off-diagonal entries are non-zero). If $p > 3$ then
    \[
    \min_{\Lambda} \rlct_\Lambda(\mathcal{I}_{p,2}(\Sigma_0);1) = \left(\frac{3p-1}{2},1\right)
    \]
    and
    \[
    \min_{\Lambda} \rlct_\Lambda(\mathcal{I}_{2,2}(\Sigma_0);1) = \left(1,1\right),
    \]
    \[
    \min_{\Lambda} \rlct_\Lambda(\mathcal{I}_{3,2}(\Sigma_0);1) = \left(3,1\right),
    \]
    where the minima range over all $\Lambda \in \R^{p\times 2}$ with $\diag(\psi) + \Lambda\Lambda^T = \Sigma_0$ for some $\psi \in  \R_{ > 0}^p$ and analogously for $p \in\{2,3\}$, respectively.
\end{lemma}


\begin{proof}
The case $p = 2$ is covered by Lemma~\ref{lemma:model-dimension}(2). So, suppose $p\geq3$.
The covariance matrix $\Sigma_0$ being in $\mathcal{M}_1$ means that its off-diagonal entries are products $\gamma_{i}\gamma_{j}$ of the entries of a
vector $\gamma\in\mathbb{R}^p$.  The genericity assumption in the statement of the lemma translates to all
entries of $\gamma$ being non-zero.  By Lemma~\ref{lem:torus}, we may
assume that every entry of the vector $\gamma$ is equal to $1$. Consequently, all of the off-diagonal entries of
$\Sigma_0$ are equal to 1.

By Fact~\ref{fact:LQ}, we may assume that the factor loading matrix is of the form
\[
\Lambda=\begin{pmatrix}
  \lambda_{11} &  0\\
  \lambda_{21} & \lambda_{22} \\
  \vdots & \vdots
\end{pmatrix}.
\]
The partial fiber ideal is then
\[
\mathcal{I} = \mathcal{I}_{p,2}(\Sigma_0) =
\left\langle  
  \lambda_{11}\lambda_{i1}-1,\,  \lambda_{i1}\lambda_{j1}+\lambda_{i2}\lambda_{j2}-1 \mid 2\le i, j\le p,\, i\not= j
\right\rangle.
\]
Since
$\lambda_{j1}\cdot (\lambda_{11}\lambda_{i1}-1) - \lambda_{i1}\cdot (\lambda_{11}\lambda_{j1}-1) = \lambda_{i1}-\lambda_{j1}$
we obtain
\begin{equation*}
  \mathcal{I} =
  \left\langle
    \lambda_{11}\lambda_{p1}-1,\, \lambda_{i1}-\lambda_{p1},\, \lambda_{p1}^2+\lambda_{i2}\lambda_{j2}-1 \::\: 2\le i, j\le p,\, i\not= j
  \right\rangle.
\end{equation*}
We apply the real analytic isomorphism
\begin{align*}
  \lambda_{11} &= \lambda_{11}', & \lambda_{i1}&=\lambda_{i1}'+\lambda_{p1}' \quad(2\le i\le p-1), & \lambda_{p1}=\lambda_{p1}'
\end{align*}
whose Jacobian determinant equals $1$.
After dropping primes, the pullback of the ideal $\mathcal{I}$  is
\begin{equation}
  \label{eq:separate}
  \mathcal{J}+\langle \lambda_{i1} \mid 2\le i\le p-1\,\rangle,
\end{equation}
with
\begin{equation}
  \label{eq:nicer-generators}
  \mathcal{J} =
  \langle 
    \lambda_{11}\lambda_{p1}-1,\, \lambda_{p1}^2+\lambda_{i2}\lambda_{j2}-1 \mid 2\le i, j\le p, i \neq j
  \rangle.
\end{equation}
In the sequel, we compute the RLCT associated with
$\mathcal{J}$ whose second component (the multiplicity) will be computed as $1$.  Using the sum rule (Fact~\ref{fact:sum-product-rule}(1)) and Example~\ref{example:linear-space}, the first component of the RLCT for $\mathcal{I}$ is larger than the one of $\mathcal{J}$
by $p-2$ and its second component is $1$.

To find the RLCT of $\mathcal{J}$ we compute a decomposition of the variety $V(\mathcal{J})$ (whose ambient space is a $(p+1)$-dimensional affine space) into smaller varieties. Note that the extension of $\mathcal{J}$ to the ring $\R[\lambda_{11},\lambda_{p1},\lambda_{22},\ldots,\lambda_{p2}][(\lambda_{p1}^2-1)^{-1}]$ equals the ideal
\[
Q = \langle \lambda_{11}\lambda_{p1}-1,\, \lambda_{i2}-\lambda_{p2},\,
  \lambda_{p1}^2+\lambda_{p2}^2-1 \mid 2\le i\le p-1 \rangle.
\]
Its $(p+1) \times p$ Jacobian matrix has the following form:
\[
\left[
\begin{array}{ccccc}
\lambda_{p1} & 0 & \cdots  & 0 & 0  \\
\lambda_{11} & 0 & \cdots & 0 & 2\lambda_{p1}  \\
0 & 1 &  &  & 0  \\
\vdots & & \ddots & & \vdots \\
\vdots & & & 1 & 0 \\
0 & -1 & \cdots  &  -1 & 2\lambda_{p2}
\end{array}
\right],
\]
This matrix has full rank $p$ whenever $\lambda_{11}\lambda_{p1} \neq 0$ which is never the case on the variety defined by $Q$ as $\lambda_{11}\lambda_{p1} -1 \in Q$. So, $Q$ defines a (complex) smooth curve. 

The intersection of $V(\mathcal{J})$ with the two hyperplanes $\lambda_{p1} = 1$ and $\lambda_{p1} = -1$ decompose into the lines defined by the ideals
\begin{align*}
  I_l = \langle  \lambda_{11}-1, \lambda_{p1}-1,\, \lambda_{i2} \mid 2\le i\le p,\, i\not=l\rangle,  \ l=2,\dots,p
  \end{align*}
  and
  \begin{align*}
  \langle  \lambda_{11}+1, \lambda_{p1}+1,\, \lambda_{i2} \mid 2\le i\le p,\, i\not=l\rangle, \ l=2,\dots,p,
\end{align*}
respectively. Consequently, $\mathcal{J}$ is the intersection of these ideals and $Q$, and $V(\mathcal{J})$ is the union of the smooth varieties defined by them.
In particular, the singular locus of
$V(\mathcal{J})$ consists of the pairwise intersections of these components.
Only two points lie in such intersections, namely,
\begin{align*}
  w&=(\lambda_{11},\lambda_{p1}, \lambda_{22},\dots,\lambda_{p2})=(1,1,0,\dots,0)
\end{align*}
and $w'=-w$. Outside these points, the RLCT of $\mathcal{J}$ equals $(p,1)$ by Example~\ref{example:linear-space}.  By symmetry,
$\rlct_{w}(\mathcal{J};1)=\rlct_{w'}(\mathcal{J};1)$, and we will see, a posteriori, that this pair is smaller than $(p,1)$. So, for the remainder of this proof, it suffices to consider the point $w$.

We begin by translating $w$ to the origin by changing $\lambda_{i 1}$-variables as
$\lambda_{11}=\lambda_{11}'+1$ and $\lambda_{p1}=\lambda_{p1}'+1$.  The pullback of $\mathcal{J}$ under this map is
\begin{align*}
  \mathcal{J}_w = \langle \lambda_{11}\lambda_{p1}+\lambda_{11}+\lambda_{p1},\, \lambda_{p1}^2+2\lambda_{p1}+\lambda_{i2}\lambda_{j2}
  \mid 2\le i<j\le p \rangle.
\end{align*} %
We further apply the real analytic map defined by
\begin{align*}
  \lambda_{11}'&=\lambda_{11}\lambda_{p1}+\lambda_{11}+\lambda_{p1}, &
  \lambda_{p1}'&=\lambda_{p1}^2+2\lambda_{p1}+\lambda_{22}\lambda_{32}, &
  \lambda_{i2}'&=\lambda_{i2} \quad \text{ for } 2\le i\le p,
\end{align*}
whose Jacobian determinant equals $2(\lambda_{11}+1)^2$, which is non-zero on the real variety defined by $\mathcal{J}_w$, and can hence be ignored.
Dropping the primes and simplifying, the resulting ideal is
\begin{equation}
\label{eq:k12line5}
 \langle \lambda_{11}, \lambda_{p1}\rangle + \mathcal{J}_1,
 \end{equation}
  where
\begin{align}
  \mathcal{J}_1 &= \langle \lambda_{i2}\lambda_{j2} - \lambda_{22}\lambda_{32} : 2 \leq i < j \leq p \rangle \label{equ:first-form} \\
  &= \langle \lambda_{i2}\lambda_{j2} - \lambda_{k2}\lambda_{l2} : 2 \leq i < j \leq p, 2 \leq k < l \leq p \rangle. \label{equ:second-form}
\end{align}
Again, by Fact~\ref{fact:sum-product-rule}(1) and Example~\ref{example:linear-space}, we further reduced the problem to computing $\rlct_0(\mathcal{J}_1;1)$ by adding $2$ to its first component.

For $p = 3$, the ideal $\mathcal{J}_1$ equals $0$, and hence, we compute the desired minimum of RLCTs as $\rlct_0(\langle \lambda_{11},\lambda_{p1}\rangle;1) + (p-2,0) = (2,1) + (1,0) = (3,1)$.

So, suppose that $p > 3$.
Here, we apply the blow-up at the origin to $\mathcal{J}_1$, see Section~\ref{subsection:blow-up}. The  generators of the ideal $\mathcal{J}_1$ in (\ref{equ:second-form}) are invariant under every permutation of the variables. Therefore, all charts of the blow-up are the same and we may just consider one of them, say the chart corresponding to $\lambda_{22}$.
Using the generators in (\ref{equ:first-form}), the pullback of $\mathcal{J}_1$ in this chart is 
\begin{align*}
  \langle
  \lambda_{22}^2(\lambda_{i2}-\lambda_{32}), 
  \lambda_{22}^2(\lambda_{j2}\lambda_{k2}-\lambda_{32}) \mid 3<i, \ 2< j<k\le p
  \rangle,
\end{align*}
and we have the Jacobian determinant $\lambda_{22}^{p-2}$.
Making the change of variables
\begin{align*}
  \lambda_{22}'&=\lambda_{22}, &
  \lambda_{i2}'&=\lambda_{i2}-\lambda_{32} \quad \text{ for } 3<j \le p,
\end{align*}
yields the ideal
\begin{equation}
\label{eq: k12line7}
  \langle
  \lambda_{22}^2\lambda_{32}(\lambda_{32}-1), 
  \lambda_{22}^2\lambda_{i2} \mid 3< i \le p
  \rangle.
  \end{equation}
There are two intersection points between the strict transform defined by $\langle
  \lambda_{32}(\lambda_{32}-1), 
  \lambda_{i2} \mid 3< i \le p
  \rangle$ and the exceptional divisor defined by $\langle \lambda_{22}^2 \rangle$, namely
\begin{align*}
(\lambda_{22},\lambda_{32},\lambda_{42},...,\lambda_{m2}) &= (0,0,0,...,0), \text{ and }  \\
(\lambda_{22},\lambda_{32},\lambda_{42},...,\lambda_{m2}) &= (0,1,0,...,0).
\end{align*}
By symmetry, the RLCTs at both points are the same, so we consider the first point. 
As $\lambda_{32} - 1$ is non-zero at this point, and hence is a unit in the ring of real analytic functions on a small neighborhood, we may instead consider the ideal 
\[
\mathcal{J}_0 = \langle \lambda_{22}^2 \rangle \cdot \langle \lambda_{32}, \ldots, \lambda_{p2}\rangle.
\]
By Example~\ref{example:linear-space}, we can now compute
\[
\rlct_0(\langle \lambda_{32},\ldots,\lambda_{p2}\rangle; 1) = (p-2,1).
\]
Moreover, using ~\cite[Theorem 7.1]{lin:2014} or the Newton polyhedron method from Section~\ref{subsection:newton-polyhedra}, we infer that
\[
\rlct_0(\langle \lambda_{22}^2\rangle; \lambda_{22}^{p-2}) = ((p-1)/2,1).
\]
As $p>3$, we get that $(p-1)/2 < p-2$, and so the product rule (Fact~\ref{fact:sum-product-rule}(2)) yields
\[
\rlct_0(\mathcal{J}_0;\lambda_{22}^{p-2}) = ((p-1)/2,1).
\]
Finally, using the sum rule (Fact~\ref{fact:sum-product-rule}(1)), the desired minimum of RLCTs is $((p-1)/2,1) + (2,0) + (p-2,0) = ((3p-1)/2,1)$, where the second summand comes from (\ref{eq:k12line5}) and the third summand comes from (\ref{eq:separate}).
\end{proof}

\begin{cor}
  \label{prop:k=2,r=1}
Let $p > 3$. The learning coefficient and its order of the factor analysis model with observations of dimension $p>3$ and $k = 2$ latent factors along the submodel with $r = 1$ latent factor satisfy

  \[
  \ell_{21} = \frac{5p-1}{4} \quad\text{and}\quad m_{21}=1.
  \]
In the case of $2$-dimensional observations, the invariants are $\ell_{21} = 3/2$ and $m_{21} = 1$.
In the case of $3$-dimensional observations, the invariants are $\ell_{21} = 3$ and $m_{21} = 1$. In each case, the genericity assumption on $\Sigma_0$ from the $1$-factor model is that all of its off-diagonal entries be non-zero, see Section~\ref{subsection:sbic}.
\end{cor}

\begin{proof}
This follows immediately from Lemma~\ref{lemma:k=2,r=1} and Theorem~\ref{fact:reduction}.
\end{proof}

\begin{remark}\label{remark:better-gens-r=1}
    We specialize the set of generators from Remark~\ref{rmk:nicer-eqs} to the case $r = 1$. The matrix $\Lambda_{11}\Lambda_{11}^T - L_{11}L_{11}^T$ from (\ref{eq:matrix1}) is of size $1\times1$ here, and hence has no off-diagonal entries. So, we are left with the off-diagonal entries of the following matrix from (\ref{eq:matrix2}):
    \[
\Lambda_{22}\Lambda_{22}^T+L_{21}L_{11}^T\left[ \left( \Lambda_{11}\Lambda_{11}^T \right)^{-1}-\left(L_{11}L_{11}^T\right)^{-1}\right]L_{11}L_{21}^T
    \]
    Note that $\Lambda_{11} > 0$. Also, $L_{11}$ consist of a single entry and that $L_{21}$ is a vector. As before, we assume that $\Sigma_0$ is generic in the sense that all of its off-diagonal entries are non-zero. This is equivalent to saying that $L_{11}\neq 0$ and all entries of $L_{21}$ are non-zero. Using Lemma~\ref{lem:torus}, we may assume that actually $L_{11} = 1$ and $L_{21}$ is a vector consisting of ones. So, the above matrix simplifies to
    \[
    \Lambda_{22}\Lambda_{22}^T + \left(\frac{1}{\Lambda_{11}^2}-1\right)\cdot \mathbf{1},
    \]
    where $\mathbf{1}$ is a $(p-1)\times (p-1)$-matrix of ones. Leaving the $\Lambda_{22}$-coordinates fixed and applying $a = \frac{1}{\Lambda_{11}^2} - 1$ whose inverse is $\Lambda_{11} = \frac{1}{\sqrt{1+a}}$ gives a real analytic isomorphism. It pulls back the ideal generated by the off-diagonal entries of the matrix above to
    \[
    \mathcal{J}_{p,k} = \langle a + \tilde\lambda_i \tilde\lambda_j^T \mid 2 \leq i < j \leq p \rangle,
    \]
    where $\tilde\lambda_i$ is the $i$-th row of $\Lambda_{22}$ which is of length $k-1$. Following Remark~\ref{rmk:nicer-eqs}, we then have
    \[
    \rlct_\Lambda(\mathcal{I}_{p,k}(\mathbf{1});1) = \rlct_{(a,\Lambda_{22})}(\mathcal{J}_{p,k};1) + (p-1,0).
    \]
\end{remark}

\begin{lemma}\label{lemma:r=1}
 
 Let $k \geq 1$ and $p \geq k+2$. Then
\begin{equation*}\label{eq:rlct}
\min_{(a,\Lambda_{22})}\rlct_{(a,\Lambda_{22})}(\cJ_{p+1,k+1};1)= \left(\frac{pk}{2}+1,1\right),
\end{equation*}
where the minimum ranges over all $(a,\Lambda_{22})$ whose image under the transformation in Remark~\ref{remark:better-gens-r=1} is $\Lambda \in \R^{p\times k}$ in LQ-coordinates such that the off-diagonal entries of $\Lambda\Lambda^T$ equal~$1$. In particular, if $\Sigma_0$ is chosen generically from the $1$-factor model then
\[
\min_{\Lambda} \rlct_\Lambda(\mathcal{I}_{p,k}(\Sigma_0);1) = \left( \frac{pk+p-k+1}{2},1\right),
\]
where the minimum ranges over all $\Lambda \in \R^{p\times k}$ such that $\Sigma_0 = \diag(\psi) + \Lambda \Lambda^T$ for some $\psi \in  \R_{ > 0}^p$.

With analogous notation for $p \geq 2$ and $k \in \{p-1, p\}$, 
\begin{equation*}
\min_{(a,\Lambda_{22})}\rlct_{(a,\Lambda_{22})}(\cJ_{p+1,k+1};1)= \left(\frac{p(p-1)}{2},1\right)
\end{equation*}
and
\[
\min_{\Lambda} \rlct_\Lambda(\mathcal{I}_{p,k}(\Sigma_0);1) = \left( \frac{p(p-1)}{2},1\right).
\]
\end{lemma}

\begin{proof}
    We proceed by induction on $k$. The base case $k = 1$ follows directly from Lemma~\ref{lemma:k=2,r=1} and Remark~\ref{remark:better-gens-r=1} because 
    \[
    \min_{(a,\Lambda_{22})} \rlct_{(a,\Lambda_{22})}(\mathcal{J}_{p+1,1+1};1)  =\left(\frac{3(p+1)-1}{2} , 1\right) - ((p+1)-1,0) = \left(\frac{p\cdot 1}{2} + 1,1\right).
    \]
    Now, let $k\geq 2$. For simplicity of notation, we denote $\Lambda_{22} = (\gamma_{ij})$ with $1 \leq i \leq p$ and $1 \leq j \leq k$. We write $\gamma_i$ for the $i$-th row of $\Lambda_{22}$. Changing generators, we may write $\mathcal{J}_{p+1,k+1} = \mathcal{J}_1 + \mathcal{J}_2$, where $\mathcal{J}_1 = \langle a + \gamma_1\gamma_2^T\rangle$ and
    \begin{align}
        \mathcal{J}_2 &= \langle\gamma_i\gamma_j^T - \gamma_1\gamma_2^T \mid 1 \leq i < j \leq p , (i,j) \neq (1,1) \rangle \\
        \label{equation:J_2}&= \langle \gamma_i\gamma_j - \gamma_e \gamma_f \mid 1 \leq i < j \leq p, 1 \leq e < f \leq p\rangle.
    \end{align}
    Now we apply the real analytic map with Jacobian determinant equal to $1$ given by $a' = a + \gamma_i\gamma_j^T$ that leaves the $\gamma_i$-coordinates unchanged. Under this map, $\mathcal{J}_2$ is left unchanged while $\mathcal{J}_1$ is transformed to $\langle a' \rangle$ which adds $(1,0)$ to the $\rlct$ by Fact~\ref{fact:sum-product-rule}(1) and Example~\ref{example:linear-space}.

    So, we only need to consider $\mathcal{J}_2$ which is homogeneous and hence admits its minimal RLCT at $0$. We blow up at this point. The generators of $\mathcal{J}_2$ in (\ref{equation:J_2}) are invariant under the action of the symmetric group and, hence, we just have to consider one chart, say the one corresponding to the variable $\gamma_{11}$. Its Jacobian determinant is $\gamma_{11}^{pk-1}$ and it pulls back $\mathcal{J}_2$ to $\langle \gamma_{11}^2\rangle \cdot \mathcal{K}$, where
    \[
    \mathcal{K} = \langle \gamma_i\gamma_j^T - \gamma_1\gamma_2^T \mid \gamma_{11} = 1, 1 \leq i < j \leq p\rangle.
    \]
    We can compute $\rlct_0(\langle \gamma_{11}^2\rangle;\gamma_{11}^{pk-1}) = (pk/2,1)$, see for instance~\cite[Theorem 7.1]{lin:2014}. By the product rule (Fact~\ref{fact:sum-product-rule}(2)), it remains to compute the minimal RLCT of $\mathcal{K}$ with respect to an amplitude function of $1$.

    The generators of $\mathcal{K}$ are stable under replacing $\Lambda_{22} = (\gamma_{ij})$ by $Q\Lambda_{22}Q^T$, where $Q$ is a orthonormal matrix. As $\gamma_{11} = 1$, the matrix $\Lambda_{22}$ varies over a small set inside the space of matrices of rank at least $1$. So, we can apply LQ-decomposition (Section~\ref{subsection:LQ-decomp}) and assume without changing the RLCT that $\Lambda_{22}$ varies over $\mathcal{L}^{p,k}_{1,+}$. In particular, we may write 
    \[
    \mathcal{K} = \langle \gamma_i\gamma_j^T - \gamma_1\gamma_2^T \mid \gamma_{12} = \ldots = \gamma_{1p} = 0, 1 \leq i < j \leq p\rangle,
    \]
    where $\gamma_{11}$ only takes values greater than $0$, or in other words, is a unit in the ring of analytic functions the ideal $\mathcal{K}$ is considered in.
    So, writing $\tilde\gamma_i$ for the $i$-th row of $\Lambda_{22}$ from which we deleted the first column, $\mathcal{K}$ simplifies to
    \begin{align*}
    \mathcal{K} &= \langle\gamma_{11}(\gamma_{j1} - \gamma_{21}) \mid 3\leq j \leq p\rangle + \langle \gamma_{i} \gamma_j^T- \gamma_{11}\gamma_{21} \mid 2 \leq i < j \leq p\rangle \\
    &= \langle\gamma_{j1} - \gamma_{21} \mid 3\leq j \leq p\rangle + \langle \gamma_{i1}\gamma_{j1} + \tilde\gamma_i\tilde\gamma_j^T - \gamma_{11}\gamma_{21} \mid 2 \leq i < j \leq p\rangle\\
    &= \langle\gamma_{j1} - \gamma_{21} \mid 3\leq j \leq p\rangle + \langle \gamma_{21}^2  - \gamma_{11}\gamma_{21} + \tilde\gamma_i\tilde\gamma_j^T \mid 2 \leq i < j \leq p\rangle.
    \end{align*}
    We apply the substitution $a = \gamma_{21}^2 - \gamma_{21}\gamma_{11}$, sending all variables except $\gamma_{21}$ to itself. The Jacobian determinant of this map is $2\gamma_{21} - \gamma_{11}$. As $\mathcal{K}$ is homogeneous in the entries of the $\gamma_i$, by Fact~\ref{fact:homogeneous} we can restrict these variables to an arbitrarily small neighborhood of the origin.
    
    Assume to the contrary that $2 \gamma_{21} - \gamma_{11} = 0$ for a point on the variety defined by $\mathcal{K}$, which implies in particular that $\gamma_{21} >0$. Then, this point satisfies $\tilde\gamma_i \tilde\gamma_j^T = \gamma_{21}^2$ for all $2\leq i < j \leq p$. As this relation is homogeneous, and hence is invariant under scaling, $\gamma_{21}$ can be assumed arbitrarily small. But this is a contradiction to $\gamma_{11}$ being bounded away from $0$ and $\gamma_{11} = 2\gamma_{21}$.
    
    So, this Jacobian determinant can be assumed to be non-zero, and hence we can ignore it. Transforming $\mathcal{K}$ under this map and applying the substitution $\gamma_{j1} = \gamma_{j1} - \gamma_{21}$ yields
    \[
    \langle \gamma_{j1} \mid 3 \leq j \leq p\rangle + \langle a + \tilde\gamma_i \tilde\gamma_j^T \mid 2 \leq i < j \leq p \rangle,
    \]
    where the second summand is just $\mathcal{J}_{p,k}$. So, by Fact~\ref{fact:sum-product-rule}(1) and Example~\ref{example:linear-space}, the minimal RLCT of $\mathcal{K}$ is just $(p-2,0)$ plus the minimal RLCT of $\mathcal{J}_{p,k}$.

    We distinguish two cases. Suppose first that $k\leq p-2$.
    Then, by the induction hypothesis, the minimal RLCT of $\mathcal{J}_{p,k}$ is $((p-1)(k-1)/2,1)$. As $pk/2 < p-2 + (p-1)(k-1)/2$ for $k \leq p-2$. Taking into account the summand $(1,0)$ from the ideal $\langle a' \rangle$, the product rule (Fact~\ref{fact:sum-product-rule}(2)) implies that the minimal RLCT of $\mathcal{J}_{p+1,k+1}$ equals $(1,0)+ (pk/2,1)= (pk/2 + 1, 1)$ as we claimed. 

    Now, suppose that $k \in \{p-1, p\}$. In this case, by the induction hypothesis, the minimal RLCT of $\mathcal{J}_{p,k}$ is $((p-2)(p-1)/2, 1)$. As $p-2 + (p-2)(p-1)/2 = (p^2-p-2)/2 < pk/2$ for $k \in \{p-1,p\}$, again taking into account the summand $(1,0)$, we infer that the minimal RLCT of $\mathcal{J}_{p+1,k+1}$ equals $((p^2 - p - 2)/2, 1) + (1,0) = ((p-1)p/2,1)$ as asserted.

\end{proof}

\begin{cor}\label{thm:r=1}
The learning coefficient and its order of the factor analysis model with $p$-dimensional observations and $k \leq p-2$ latent factors along the submodel with $r = 1$ latent factor satisfy
  \[
  \ell_{k1} =\frac{pk+3p-k+1}{4}
  \quad\text{and}\quad
  m_{k1} = 1.
  \]
  If $k \in \{p-1,p\}$ then
  \[
  \ell_{k1} = \frac{p(p+1)}{4}  \quad \text{and} \quad m_{k1}=1.
  \]
  In each case, the genericity assumption on $\Sigma_0$ from the $1$-factor model is that all of its off-diagonal entries be non-zero, see Section~\ref{subsection:sbic}.
\end{cor}
\begin{proof}
    This follows immediately from Lemma~\ref{lemma:r=1} and Theorem~\ref{fact:reduction}.
\end{proof}

\section*{Acknowledgements}
We want to thank the anonymous referees for their helpful comments that helped improving the manuscript substantially. 
D. Kosta gratefully acknowledges funding from the Royal Society Dorothy Hodgkin Research Fellowship DHF$\backslash$R1$\backslash$201246. We would like to thank the associated Royal Society Enhancement grant RF$\backslash$ERE$\backslash$231053  that supported  D. Windisch's postdoctoral research, when part of this research was carried out. D. Windisch was also partially supported by the FWO grants G0F5921N (Odysseus) and G023721N, and by the KU Leuven grant iBOF/23/064.

\bibliographystyle{amsalpha}
\bibliography{factoranalysis}
   \end{document}